\documentclass[12pt]{article}

\usepackage{amsmath,amsfonts,amssymb}
\usepackage{amsthm}
\usepackage{mathtools}
\usepackage{verbatim}
\usepackage{multicol}
\usepackage{ bbold }
\usepackage{enumitem}
\usepackage{graphicx}
\usepackage[margin=1in]{geometry}
\usepackage{fancyhdr}
\usepackage[capitalize,noabbrev]{cleveref}

\pagestyle{fancy}
\rhead{\thepage}

\makeatletter
\def\thmhead@plain#1#2#3{%
  \thmname{#1}\thmnumber{\@ifnotempty{#1}{ }\@upn{#2}}%
  \thmnote{ {\the\thm@notefont#3}}}
\let\thmhead\thmhead@plain
\makeatother

\makeatletter
\newcommand{\tpitchfork}{%
  \vbox{
    \baselineskip\z@skip
    \lineskip-.52ex
    \lineskiplimit\maxdimen
    \m@th
    \ialign{##\crcr\hidewidth\smash{$-$}\hidewidth\crcr$\pitchfork$\crcr}
  }%
}
\makeatother

\newtheorem{theorem}{Theorem}

\newtheorem{lemma}{Lemma}

\newtheorem{definition}{Definition}
\newtheorem{question}{Question}

\newtheorem*{lemmarm}{Lemma}

\title{A Generalized Isoperimetric Inequality via\\ Thick Embeddings of Graphs}
\date{}
\author{Elia Portnoy}

\begin{document}
\maketitle

\begin{abstract}
We prove a generalized isoperimetric inequality for a domain diffeomorphic to a sphere that replaces filling volume with $k$-dilation. Suppose $U$ is an open set in $\mathbb{R}^n$ diffeomorphic to a Euclidean $n$-ball. We show that in dimensions at least 4 there is a map from a standard Euclidean ball of radius about $vol(\partial U)^{1/(n-1)}$ to $U$, with degree 1 on the boundary, and $(n-1)$-dilation bounded by some constant only depending on $n$. We also give an example in dimension 3 of an open set where no such map with small $(n-1)$-dilation can be found.
\par 
The generalized isoperimetric inequality is reduced to a theorem about thick embeddings of graphs which is proved using the Kolmogorov-Barzdin theorem and the max-flow min-cut theorem. The proof of the counterexample in dimension 3 relies on the coarea inequality and a short winding number computation.
\end{abstract}

\section{Introduction}

In this paper we will prove a generalized isoperimetric inequality for a domain diffeomorphic to a sphere. For reference, we recall the classic isoperimetric inequality.

\begin{theorem}\label{iso} If $U$ is a smooth open set in $\mathbb{R}^n$ then,

$$ vol(U) \lesssim_n vol(\partial U)^{n/(n-1)}$$

where $ x \lesssim_n y$ means there is some constant $C$, only depending on $n$, so that $x \le Cy$.
\end{theorem}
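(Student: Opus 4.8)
The plan is to deduce \Cref{iso} from the Loomis--Whitney inequality; this avoids invoking the Sobolev inequality, whose standard proof is anyway the same slicing argument. Assume $U$ is bounded and nonempty, and for $i=1,\dots,n$ let $\pi_i\colon\mathbb{R}^n\to\mathbb{R}^{n-1}$ be the orthogonal projection forgetting the $i$-th coordinate.

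The geometric input is that $\pi_i(U)\subseteq\pi_i(\partial U)$, and consequently
$$ vol_{n-1}\big(\pi_i(U)\big)\ \le\ vol_{n-1}\big(\pi_i(\partial U)\big)\ \le\ vol(\partial U). $$
For the inclusion: if $y\in\pi_i(U)$ then $\pi_i^{-1}(y)\cap U$ is a nonempty bounded open subset of the line $\pi_i^{-1}(y)$, so its closure within that line contains a point $p\notin U$; then $p\in\partial U$ and $\pi_i(p)=y$. For the second inequality: $\pi_i$ is $1$-Lipschitz, hence does not increase $(n-1)$-dimensional Hausdorff measure, and $\partial U$ is a smooth hypersurface with $\mathcal{H}^{n-1}(\partial U)=vol(\partial U)$, while $\mathcal{H}^{n-1}$ on $\mathbb{R}^{n-1}$ is Lebesgue measure.

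Next I would invoke the Loomis--Whitney inequality: for nonnegative measurable $f_1,\dots,f_n$ on $\mathbb{R}^{n-1}$,
$$ \int_{\mathbb{R}^n}\prod_{i=1}^n f_i(\pi_i x)^{1/(n-1)}\,dx\ \le\ \prod_{i=1}^n\Big(\int_{\mathbb{R}^{n-1}}f_i\Big)^{1/(n-1)}, $$
which follows from the generalized H\"older inequality by induction on $n$, integrating out one coordinate at a time. Applying it with $f_i=\mathbf{1}_{\pi_i(U)}$ and using that $U\subseteq\pi_i^{-1}(\pi_i(U))$ for every $i$, so that $\mathbf{1}_U(x)\le\prod_{i=1}^n\mathbf{1}_{\pi_i(U)}(\pi_i x)^{1/(n-1)}$ pointwise, gives
$$ vol(U)\ \le\ \prod_{i=1}^n\big(vol_{n-1}(\pi_i(U))\big)^{1/(n-1)}. $$
Combining this with the bound $vol_{n-1}(\pi_i(U))\le vol(\partial U)$ from the previous paragraph yields $vol(U)\le vol(\partial U)^{n/(n-1)}$, as desired. (One could shave the constant below $1$ by noting that a generic line through $\pi_i(U)$ meets $\partial U$ at least twice, but this is irrelevant here.)

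The only steps that need any care are the short point-set argument for $\pi_i(U)\subseteq\pi_i(\partial U)$ and the measure-theoretic bookkeeping that bounds $\mathcal{H}^{n-1}$ of the projected boundary by $vol(\partial U)$; the analytic core, Loomis--Whitney, is a soft consequence of H\"older, so I do not expect any genuine obstacle. A parallel route carrying the same content is to apply the Gagliardo--Nirenberg--Sobolev inequality $\|u\|_{L^{n/(n-1)}}\lesssim_n\|\nabla u\|_{L^1}$ to a mollification $u_\varepsilon=\mathbf{1}_U*\rho_\varepsilon$, using $\|\nabla u_\varepsilon\|_{L^1}\le vol(\partial U)$ and $\|u_\varepsilon\|_{L^{n/(n-1)}}\to vol(U)^{(n-1)/n}$ as $\varepsilon\to0$; but this merely relocates the same slicing estimate into the proof of the Sobolev inequality.
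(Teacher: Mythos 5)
The paper states \cref{iso} as a known background fact (``the classic isoperimetric inequality'') and offers no proof; it is used only as a black box inside the proof of \cref{plemma}. So there is no in-paper argument to compare against, and the right question is simply whether your argument is sound. It is. The Loomis--Whitney route you take is one of the standard elementary proofs of the non-sharp isoperimetric inequality: the inclusion $\pi_i(U)\subseteq\pi_i(\partial U)$ is justified correctly (a nonempty bounded open subset of a line has a boundary point, which is a limit of points of $U$ not in $U$, hence lies in $\partial U$), the bound $\mathcal{H}^{n-1}(\pi_i(\partial U))\le\mathcal{H}^{n-1}(\partial U)=vol(\partial U)$ is the standard Lipschitz monotonicity of Hausdorff measure, and Loomis--Whitney applied to the indicators $\mathbf{1}_{\pi_i(U)}$ gives exactly $vol(U)\le\prod_i vol_{n-1}(\pi_i(U))^{1/(n-1)}$, which combines with the projection bound to give the stated exponent $n/(n-1)$ with constant $1$. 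You are right to make the boundedness hypothesis explicit; the paper leaves it implicit, and without it the statement is false (take $U$ to be the complement of a closed ball). The alternative Gagliardo--Nirenberg--Sobolev route you mention is equivalent in content, as you note. Either is more than adequate for the paper's purposes, which only need the inequality up to a dimensional constant.
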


Applying Banyaga's generalization of Moser's Theorem to manifolds with boundary \cite{B} we can deduce a slightly stronger theorem when $U$ is a ball.

\begin{theorem} \label{mosthm} Let $U \subset \mathbb{R}^n$ be a smooth bounded open set that is diffeomorphic to a ball in $\mathbb{R}^n$. Let $B^n$ be a ball in $\mathbb{R}^n$ with the same volume as $U$. Then there exists a smooth map, $$F: (B^n, \partial B^n) \to (U, \partial U)$$
which has degree 1 on the boundary and $|det(DF)| = 1$.
\end{theorem}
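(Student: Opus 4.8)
The plan is to take an arbitrary diffeomorphism from a round ball onto $U$, which exists by hypothesis, and then correct its Jacobian to be identically $1$ using Moser's deformation argument on the compact manifold with boundary $B^n$, in the form due to Banyaga \cite{B}.

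\emph{Step 1 (normalize a diffeomorphism).} By hypothesis there is a diffeomorphism from a round ball onto $U$; composing with a dilation of the domain we may take it to be a diffeomorphism $\phi\colon B^n \to U$ with $vol(B^n) = vol(U)$, and composing with a reflection of $B^n$ if necessary we may assume $\phi$ is orientation preserving. In particular $\phi$ carries $\partial B^n$ diffeomorphically onto $\partial U$ with degree $1$.

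\emph{Step 2 (pull back the volume form).} Let $\omega_{0} = dx_1 \wedge \cdots \wedge dx_n$ denote the standard volume form, regarded on $B^n$ or on $U$ as appropriate, and set $\omega = \phi^{*}\omega_{0}$. Since $\phi$ is an orientation preserving diffeomorphism, $\omega$ is a smooth positive volume form on $B^n$, and by the change of variables formula $\int_{B^n}\omega = vol(U) = vol(B^n) = \int_{B^n}\omega_{0}$.

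\emph{Step 3 (Moser on $B^n$).} Because $\omega$ and $\omega_{0}$ are volume forms on the compact connected manifold with boundary $B^n$ with equal total integral, Banyaga's generalization of Moser's theorem \cite{B} gives a diffeomorphism $\psi\colon B^n \to B^n$ (isotopic to the identity) with $\psi^{*}\omega = \omega_{0}$. Put $F = \phi\circ\psi\colon (B^n,\partial B^n)\to (U,\partial U)$. Then $F^{*}\omega_{0} = \psi^{*}\phi^{*}\omega_{0} = \psi^{*}\omega = \omega_{0}$, which is precisely the statement that $\det(DF)\equiv 1$, so in particular $|\det(DF)| = 1$. Finally, $F$ is a composition of diffeomorphisms of manifolds with boundary, hence a diffeomorphism carrying $\partial B^n$ onto $\partial U$; since $\det(DF)>0$ it is orientation preserving, so it has degree $1$ on the boundary.

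\emph{Where the work is.} The only nontrivial ingredient is Step 3. On a closed manifold, Moser's trick interpolates $\omega_t = (1-t)\omega_0 + t\omega$ and solves for a time-dependent vector field whose flow realizes the equivalence; on a manifold with boundary one must in addition arrange this vector field to be tangent to $\partial B^n$, so that its flow preserves $B^n$ and the resulting $\psi$ is a genuine diffeomorphism of the pair $(B^n,\partial B^n)$. This is exactly the point handled in \cite{B}, which we invoke as a black box. Everything else is the change of variables formula together with the elementary fact that a diffeomorphism of a manifold with boundary restricts to a diffeomorphism of the boundary.
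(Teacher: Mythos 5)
Your proposal is correct and follows exactly the route the paper indicates: the paper simply states that \cref{mosthm} follows by applying Banyaga's generalization of Moser's theorem to manifolds with boundary, and your argument fills in precisely those details (normalize a diffeomorphism, pull back the volume form, invoke Banyaga to equalize it with the standard form, compose). No gaps, and no genuinely different method.
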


In other words, \cref{mosthm} says we can find a map from $B^n$ to $U$ that does not expand $n$-dimensional volume. We can also try to control how much $F$ stretches the volume of $k$-dimensional subsets of the domain.

\begin{definition} Suppose $F: M \to N$ is a Lipschitz map between Riemannian manifolds that is differentiable on a set of full measure $\Omega$. We define the \textbf{k-dilation} of $F$ as

$$ Dil_k(F) = \sup_{p \in \Omega} |\Lambda^k DF(p)|$$

Where for a symmetric matrix $A$, $|\Lambda^k A|$ denotes the product of its $k$ largest eigenvalues. The geometric interpretation of this definition is as follows. If $\Sigma$ is a $k$-dimensional submanifold of $M$ then,

$$ vol(F(\Sigma)) \le  Dil_k(F)  vol(\Sigma)$$

\noindent where $vol$ stands for the $k$-dimensional volume.
\end{definition}

The first theorem in this paper improves \cref{mosthm} as follows,

\begin{theorem} \label{mthm} Let $U$ be a smooth bounded open set in $\mathbb{R}^n$ that admits a diffeomorphism to a ball in $\mathbb{R}^n$ and let $B^n_R$ stand for a standard ball of radius $R$ in $\mathbb{R}^n$. For $R = vol(\partial U)^{1/(n-1)}$ and $n\ge 4$, there exists a Lipschitz map,
$$F: (B^n_R, \partial B^n_R) \to (U, \partial U)$$
which has degree 1 on the boundary and with $ Dil_{n-1}(F) \lesssim_n 1$.
\end{theorem}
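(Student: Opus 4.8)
\emph{Strategy.} I would construct $F$ from two ingredients: a degree-one, bounded-$(n-1)$-dilation parametrization of $\partial U$ by $\partial B^n_R$, and an extension over the interior that follows a controlled ``thick layout'' of $U$ inside a ball of radius comparable to $R$. The boundary ingredient is soft: $\partial U$ is diffeomorphic to $S^{n-1}$ and, because $R=vol(\partial U)^{1/(n-1)}$, it carries $(n-1)$-volume comparable to $\partial B^n_R$, so Moser's theorem on the closed manifold $S^{n-1}$ produces a diffeomorphism $\partial B^n_R\to\partial U$ with constant Jacobian, hence degree $1$ and $Dil_{n-1}\lesssim_n 1$. I also record from \cref{iso} that $vol(U)\lesssim_n vol(\partial U)^{n/(n-1)}=R^n$, so $U$ and $B^n_R$ have comparable volume; without this there could be no bounded-dilation map at all.

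\emph{Reduction to a thick graph embedding.} Fix a bounded-geometry triangulation of $\partial U\cong S^{n-1}$ with $N\sim vol(\partial U)\sim R^{n-1}$ simplices, and let $\Gamma$ be (roughly) its dual graph -- a bounded-degree graph on $N$ vertices. This is the only step where $n\ge 4$ is essential: dual graphs of triangulated spheres of dimension $\ge 3$ need not be planar and can be genuine expanders, so no naive space-filling layout will do and one needs the Kolmogorov--Barzdin machinery. Concretely, I would use a thick embedding theorem of the form: for $n\ge 4$, any bounded-degree graph on $N$ vertices embeds into a Euclidean ball of radius $\lesssim_n N^{1/(n-1)}$ with its edges realized as pairwise-disjoint tubes of unit thickness. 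Applied to $\Gamma$, with $N\sim R^{n-1}$, this puts a thick copy $\widetilde\Gamma$ of $\Gamma$ inside $B^n_{CR}$.

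\emph{Assembling $F$, and proving the thick embedding.} Thickening $\widetilde\Gamma$ according to the triangulation data yields an embedded bounded-geometry $(n-1)$-sphere $\widetilde S\subset B^n_{CR}$ abstractly isomorphic to $\partial U$, bounding a tame region $\widetilde U$. One fills $\widetilde U$ from $B^n_{CR}$ by a degree-one map of bounded $(n-1)$-dilation, folding the ball along the disjoint unit-thick tubes of $\widetilde\Gamma$: the dilation budget is spent exactly as in the model computation where a tube of length $\ell$ and thickness $\delta$ is filled from a ball by stretching one direction by $\sim\ell/\delta$ against the $n-2\ge 2$ compressing transverse directions, keeping the product of the top $n-1$ singular values bounded. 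Comparing $\widetilde U$ with $U$ through the boundary isomorphism (both are balls of volume $\lesssim_n R^n$ with bounded-geometry triangulations), extended inward with control, and composing, gives $F$; rescaling absorbs $C$ into the dimensional constants. For the thick embedding theorem itself, I would place the $N$ vertices on an $(n-1)$-dimensional grid of side $\sim N^{1/(n-1)}$ in the ball and route the $O(N)$ edges through the remaining coordinate direction, encoding the routing as an integral flow; that the grid carries enough capacity -- the extremal case being expander congestion -- is precisely the max-flow min-cut / Kolmogorov--Barzdin input, and the transverse room available when $n\ge 4$ lets the routing close up without loss.

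\emph{Main obstacle.} The crux is the reduction in both directions: extracting a bounded-degree graph of the right size $\sim vol(\partial U)$ from $\partial U$, and then converting its thick embedding back into a controlled map $B^n_{CR}\to U$ with the prescribed boundary behavior -- in particular matching the filled model $\widetilde U$ to the given $U$ with bounded $(n-1)$-dilation. This step must use essentially that $U$ is a ball: a bounded-geometry domain with the same boundary volume but nontrivial topology (a thick expander) admits no such parametrization, as both a volume count and the dimension-$3$ counterexample in the paper show, so the argument cannot be purely metric.
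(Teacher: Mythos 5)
Your proposal diverges from the paper's route at a structural level, and the divergence creates a gap you yourself flag but do not close. The paper does not build its graph from a triangulation of $\partial U$: it takes $\Gamma = G(U,\omega)$, the $1$-skeleton of a fine grid \emph{inside} $U$, and proves (\cref{mlemma}) that this graph admits an $\omega$-thick embedding $\psi:(\Gamma,\partial\Gamma)\to(B_R,\partial B_R)$ built from a good-ball partition of $U$, \cref{kblemma}, and the max-flow \cref{elemma}. The crucial payoff of this choice is that $\Gamma$ already sits inside $U$, so the ``comparison'' you need is free: \cref{tlemma} turns $\psi^{-1}$ into a bilipschitz identification between a tube around $\psi(\Gamma)\subset B_R$ and a tube around $\Gamma\subset U$, and the rest of $B_R$ is crushed onto the image of the dual $(n-2)$-skeleton, killing $Dil_{n-1}$ there. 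Your graph, being the dual of an abstract triangulation of $\partial U$, only records the intrinsic combinatorics of the boundary and forgets how $\partial U$ sits in $\mathbb{R}^n$; consequently your thickened model $\widetilde U$ and the actual $U$ are two $n$-balls with matching boundary data but potentially wildly different interior geometry, and ``extended inward with control'' is not a step you have supplied -- producing such an extension with $Dil_{n-1}\lesssim_n 1$ is essentially the theorem you are trying to prove. The paper's interior-skeleton choice is precisely what avoids this circularity.

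There is also a concrete error in your account of where $n\ge 4$ enters. Kolmogorov--Barzdin thick embeddings of bounded-degree graphs work already in $\mathbb{R}^3$ (that is the original setting of \cite{KB}), so the expander/congestion phenomenon you cite does not distinguish $n=3$ from $n\ge 4$. The dimension restriction in the paper is used only after the thick embedding is built, to extend $\psi^{-1}$ from the tube to all of $B_R$: for $n\ge 4$ any two embeddings of a graph into $B_R^n$ are isotopic rel boundary, so the partial inverse glues with a model diffeomorphism; for $n=3$ graph embeddings can be knotted and this extension step fails, which is why the paper treats dimension $3$ separately with a counterexample (\cref{wthm}). Secondary points: your Moser-on-$S^{n-1}$ boundary map is unnecessary -- the boundary behavior in the paper comes for free from the requirement $\psi(\partial\Gamma)\subset\partial B_R$, enforced by \cref{elemma} -- and the ``folding the ball along tubes to fill $\widetilde U$'' step is asserted by a local model but never assembled into a global map with a dilation bound.

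\end{document}
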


For an example of this theorem we can consider the case when $U$ is the open ellipse in $\mathbb{R}^n$ with axes lengths $(L, 1,1 \ldots 1)$, where $L \ge 1$. Then for $R = L^{1/(n-1)}$, we can take $F: B_R \to U$ to be the diffeomorphism given be a linear map. The eigenvalues of $DF$ are $(L^{(n-2)/(n-1)}, L^{-1/(n-1)} \ldots  L^{-1/(n-1)})$. Multiplying the $(n-1)$ largest eigenvalues of $DF$ we get $Dil_{n-1}(F) = 1$.
\\ \par Now we give an outline of the proof of \cref{mthm}. The first step is to construct an expanding embedding of a neighborhood of a 1-dimensional subset of $U$ into $B_R$, which will act as a skeleton for $F^{-1}$. We  give some definitions that will help us work with such embeddings.

\begin{definition} 
For some number $\omega > 0$, and some open set $U \subset \mathbb{R}^n$, define $G(U, \omega)$, to be the graph defined by intersecting $U$ with a 1-dimensional grid with side-length $\omega$. The grid's vertices are $\omega\mathbb{Z}^n$ and its edges are all the straight segments between vertices of distance of $\omega$ from each other. We can assume this grid transversely intersects $\partial U$, and we let the points where the grid intersects $\partial U$ also be verticies of $G(U, \omega)$. Sometimes we will treat $G(U, \omega)$ as a 1-dimensional subset of $U$ and sometimes we will treat it as a graph.
\end{definition}

For an example, take $U = [-3/2, 3/2]^n$. Then $G(U, 1)$ is the $n$-hypercube graph along with $n2^n$ edges between the $n$-hypercube and $\partial U$.

\begin{definition} Fix some dimension $n>0$. Call a map between two graphs, $\psi: \Gamma_1 \to \Gamma_2$, \textbf{thick}, if $\psi$ maps vertices to vertices, edges to non-trivial paths, and the pre-image of each edge in $\Gamma_2$ intersects $\lesssim_n 1$ edges of $\Gamma_1$.
\par
For $U$ an open set in $\mathbb{R}^n$, we say that a map, $\psi: \Gamma_1 \to U$, is \textbf{$\omega$-thick} if $\psi$ has image in $G(U, \omega) \subset U$ and $\psi$ is a thick map from $\Gamma_1$ to $G(U, \omega)$. 
\end{definition}
The following lemma says that we can construct a map that functions as a skeleton for $F^{-1}$.

\begin{lemma} \label{mlemma} Let $n\ge 3$ and suppose we are given a smooth bounded open set $U \subset \mathbb{R}^n$. For some small $\omega>0$, that depends on $U$, let $\Gamma = G(U, \omega)$ and $\partial \Gamma = \Gamma \cap \partial U$. Then for $R = vol(\partial U)^{1/(n-1)}$, there is an $\omega$-thick embedding, $$\psi: (\Gamma, \partial \Gamma) \to (B_R, \partial B_R)$$ 
\end{lemma}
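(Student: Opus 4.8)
The idea is to build $\psi$ by a recursive ``folding'' of the lattice graph $\Gamma$ into the lattice graph $\Gamma_R := G(B_R,\omega)$, using a geometric separator theorem for lattice subgraphs to control the combinatorics and the max-flow min-cut theorem to route the edges of $\Gamma$ without congestion; this is the scheme behind the Kolmogorov--Barzdin embedding estimates.

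First, the counting. Taking $\omega$ small enough that the $\omega$-grid meets $\partial U$ transversally and is fine enough to resolve $U$, every vertex of $\Gamma$ has degree at most $2n$, while $\#V(\Gamma)$ is comparable (up to a constant depending only on $n$) to $\omega^{-n}\,vol(U)$ and $\#\partial\Gamma$ is comparable to $\omega^{-(n-1)}\,vol(\partial U)$. By the classical isoperimetric inequality, \cref{iso}, $vol(U)\lesssim_n vol(\partial U)^{n/(n-1)}=R^n$, so $\#V(\Gamma)\lesssim_n (R/\omega)^n$, which is comparable to $\#V(\Gamma_R)$; likewise $\#\partial\Gamma$ is comparable to $(R/\omega)^{n-1}$, which is comparable to the number of vertices of $\Gamma_R$ lying on $\partial B_R$. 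Thus ``the vertices of $\Gamma$ fit into $\Gamma_R$'' and ``the boundary vertices fit onto $\partial B_R$'', and both statements are scale-invariant, hence survive dyadic subdivision. The structural input is that $\Gamma$, being an induced subgraph of the $\omega$-scaled integer lattice, is \emph{uniformly $(n-1)$-separated}: any induced subgraph on $m$ vertices splits into pieces of size at most $\tfrac23 m$ after deleting $\lesssim_n m^{(n-1)/n}$ vertices (cut along a coordinate hyperplane chosen to balance the two sides). Dually, $\Gamma_R$ has large cuts at every scale: by the edge-isoperimetric inequality for the grid, separating off $k\le\tfrac12\#V(\Gamma_R)$ of its vertices, or separating two prescribed halves of a sub-box, requires deleting $\gtrsim_n k^{(n-1)/n}$ edges.

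I would then construct $\psi$ by recursion on scale. Bisect $B_R$ by a coordinate hyperplane into sub-boxes $B',B''$, each with about $\tfrac12\#V(\Gamma_R)$ lattice vertices; use the separator theorem to split $\Gamma$ into $\Gamma',\Gamma''$ of comparable sizes plus a separator $S$ with $|S|\lesssim_n(\#V(\Gamma))^{(n-1)/n}$, arranging the split so that the boundary vertices of $\Gamma$ destined for a given half of $\partial B_R$ go into the corresponding $\Gamma'$ or $\Gamma''$. Recursively embed $\Gamma'$ (with about half of $S$ adjoined) into $B'$ and $\Gamma''$ into $B''$, and then reconnect: the edges of $\Gamma$ running between $\Gamma'$ and $\Gamma''$, of which there are $\lesssim_n(\#V(\Gamma))^{(n-1)/n}$, together with the paths carrying the separator vertices to their positions, must be routed as paths in $\Gamma_R$. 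Since the available capacity --- the maximum flow between the two halves of $\Gamma_R$, which by the previous paragraph is $\gtrsim_n(\#V(\Gamma_R))^{(n-1)/n}\gtrsim_n(\#V(\Gamma))^{(n-1)/n}$ --- dominates the demand, the max-flow min-cut theorem produces the routing. The recursion terminates at boxes of bounded size, where the embedding is trivial, and by design $\partial\Gamma$ lands in $\partial B_R$. Finally, to upgrade the resulting bounded-multiplicity map to an honest embedding, run the routing in the refined grid $G(B_R,\omega/C_n)$ and use the extra room to pull the finitely many colliding path-segments apart.

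The main obstacle is exactly the congestion control at the reconnection step: the inter-piece edges and separator vertices must be routed through $\Gamma_R$ \emph{without} disturbing the sub-embeddings already placed and, crucially, so that the multiplicities contributed at all $O(\log(R/\omega))$ scales sum to a constant depending only on $n$ rather than growing with $R/\omega$. This forces one to route through the full-dimensional bulk of each sub-box with a balanced (e.g.\ randomized, Valiant-style) routing scheme rather than through thin collars, exploiting that $n\ge 3$ leaves enough room, and to use the scale-invariance of the two matched counting inequalities so that capacity beats demand uniformly across scales. This quantitative routing estimate is the heart of the Kolmogorov--Barzdin circle of ideas and is where essentially all the work lies; the reduction of \cref{mlemma} to it via \cref{iso} and max-flow min-cut is, by comparison, bookkeeping.
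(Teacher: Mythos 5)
Your approach --- recursive bisection of $\Gamma$ and $B_R$ with max-flow routing across the bisection cut --- is genuinely different from what the paper does, and it has a gap that is not just a detail. The paper does not recurse across scales at all: \cref{plemma} constructs in a single inductive pass a partition of $U$ into pieces $U_i$, each lying in a ``good $G$-ball'' $B^G_{r_i}$, with $\sum r_i^{n-1}\lesssim_n vol(\partial U)$. Each $\Gamma\cap U_i$ is then placed isometrically into a disjoint ball near $\partial B_R$; the endpoints of cut edges are carried out to $\partial B_R$ by \cref{elemma} (one max-flow argument per ball, made applicable by condition \ref{eq:3}); and because every cut edge then terminates on $\partial B_R$, \cref{kblemma} is applied once, exactly in the form stated, to reconnect them. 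All the multi-scale work is pushed into the geometry of the partition, not into the routing.

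The gap is precisely the one you name but do not close: multiplicity summed over the $O(\log(R/\omega))$ recursion levels. If level $\ell$ routes its $\sim\bigl((R/\omega)/2^{\ell/n}\bigr)^{n-1}$ crossing paths through a thin slab near that level's bisecting hyperplane, each level adds $O(1)$ local multiplicity and the total is $O(\log(R/\omega))$, not $\lesssim_n 1$. You propose to fix this by a Valiant-style bulk routing and assert that scale-invariance makes ``capacity beat demand uniformly across scales,'' but a level-$\ell$ path passes through the interiors of level-$(\ell{+}1),(\ell{+}2),\dots$ sub-boxes where later routings also live, so per-level slack does not by itself prevent the multiplicities from stacking. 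This is a strictly harder statement than \cref{kblemma}, which only routes between vertices already placed on $\partial B_R$ and is used by the paper in exactly that weaker form; the interior, multi-scale routing estimate you need is not supplied by the Kolmogorov--Barzdin machinery you cite, and your own closing sentence concedes the proof is not there. Two secondary issues: the parenthetical claim that a coordinate hyperplane always gives a balanced $O(m^{(n-1)/n})$-vertex separator for an induced lattice subgraph is not immediate (the geometric separator theorems achieving this bound use spheres or arbitrary hyperplanes, not axis-aligned cuts), and the recursive invariant is never pinned down --- after a few levels the sub-problems are arbitrary vertex subsets in arbitrary sub-boxes, with no control on the ratio of accumulated boundary/separator vertices to interior ones, so the top-level counting that made the numbers match is not automatically inherited by the subproblems.
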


The technique of considering a skeleton for an inverse map by using a thick embedding was previously used by Guth in \cite{GC} to construct maps with small $k$-dilation between $S^m$ and $S^n$ for $m > n$ and $k > (m+1)/2$. Since maps between spaces of different dimensions were considered, Guth was able to construct the thick embedding using a quantitative h-principle. We will rely on a different method for maps between spaces of the same dimension. First we cut our graph $\Gamma$ into pieces that roughly look like balls. We will then rearrange these pieces into a larger ball and reconnect the edges between the pieces using two lemmas. The first lemma was proven in dimension three by Kolmogorov and Barzdin and generalized to higher dimensions by Guth.  

\begin{lemma} \cite{KB, GQ} \label{kblemma} Let $\Gamma$ be a graph which is a disjoint union of edges and let $V\Gamma$ stand for the vertices of $\Gamma$. Suppose that for $R = |V\Gamma|^{1/(n-1)}$ we are given an embedding,
$$\psi: V\Gamma \to \partial B^n_R$$
Then we can construct a 1-thick map,
$$\Psi: \Gamma \to B^n_R$$
which extends $\psi$.
\end{lemma}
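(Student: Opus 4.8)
The plan is to deduce \cref{kblemma} from the Kolmogorov--Barzdin construction \cite{KB} in dimension three and Guth's higher-dimensional generalization \cite{GQ}; below I sketch the routing argument that underlies those results, specialized to the situation at hand. Note first that $\Gamma$, being a disjoint union of edges, carries no expansion obstruction — it is the ``easiest'' graph for such an embedding — so one should be able to achieve a genuinely $1$-thick extension. Write $N = |V\Gamma|$ and list the $N/2$ edges of $\Gamma$ as pairs $\{v_i,w_i\}$ with prescribed endpoint images $\psi(v_i),\psi(w_i)\in\partial B^n_R$, where $R=N^{1/(n-1)}$; after perhaps perturbing $\psi$ slightly we may assume it sends $V\Gamma$ to distinct vertices of the unit grid $G(B^n_R,1)$, which is legitimate because $N\approx vol(\partial B^n_R)$, so the terminals are no denser on the sphere than the grid allows. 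It now suffices to join each pair $\psi(v_i),\psi(w_i)$ by a grid path $\gamma_i$ so that every edge of $G(B^n_R,1)$ lies on $\lesssim_n 1$ of the $\gamma_i$; the map $\Gamma\to G(B^n_R,1)$ sending the $i$-th edge of $\Gamma$ across $\gamma_i$ is then the required $\Psi$.

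The first step is the \emph{average} estimate. Any two grid vertices of $B^n_R$ are joined by a grid path of length $\lesssim_n R$, so all $N/2$ pairs can be connected using total grid-edge length $\lesssim_n N\cdot R = N^{n/(n-1)} \approx R^n \approx vol(B^n_R)$; since $G(B^n_R,1)$ has $\approx R^n$ edges, this says the \emph{average} number of $\gamma_i$ through a grid edge is $\lesssim_n 1$. The whole point of the lemma is to promote this to a bound valid at \emph{every} edge. Following Kolmogorov--Barzdin one routes the pairs randomly: connect $\psi(v_i)$ to $\psi(w_i)$ through randomly chosen intermediate ``hub'' vertices along random monotone grid paths, with the hubs chosen to stay away from the center of $B^n_R$, so that the $\gamma_i$ diffuse through a spherical shell rather than piling up along chords through the origin (a straight-chord routing genuinely over-congests the center). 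For a fixed grid edge $e$, bound the probability that $\gamma_i$ uses $e$; summing over $i$ recovers the $\lesssim_n 1$ expected congestion, now as a sum of independent terms, and a concentration inequality controls the congestion at $e$.

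The main obstacle — and the reason I would lean on \cite{KB,GQ} rather than carry this out in full — is passing from expected congestion to a bound holding \emph{uniformly} over all $\approx R^n$ grid edges without leaking a $\log N$ factor from a naive union bound. In \cite{KB,GQ} this is handled by organizing the routing hierarchically: bisect $B^n_R$ by (randomly placed) interior hyperplanes, route all pairs crossing a given cut through \emph{distinct} ``doorway'' edges of that cut — possible because a subcube of side $s$ meets $\partial B^n_R$, hence meets the terminal set, in only $\lesssim s^{n-1}$ points, matching the $\approx s^{n-1}$ doorways a side-$s$ cut provides — and recurse on the subcubes; the isoperimetry of $B^n_R$ and of the intermediate regions then keeps the congestion at each grid edge $\lesssim_n 1$. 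An alternative that dovetails with the methods already used in this paper is to set up $G(B^n_R,1)$ as a flow network with a large constant capacity on every edge and invoke an integral multicommodity-routing / max-flow--min-cut argument, verifying the cut condition with constant-factor slack from the same isoperimetric inequality for ball-like regions whose terminals lie on the boundary. Either route produces the $1$-thick map $\Psi$ extending $\psi$.
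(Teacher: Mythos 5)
The paper itself does not prove \cref{kblemma}; it simply cites \cite{KB, GQ}, which is also where your sketch ultimately lands, so the overall strategy (defer to the Kolmogorov--Barzdin / Guth routing argument) matches. But one piece of your sketch is genuinely wrong and worth flagging: the suggested ``alternative'' of setting up $G(B^n_R,1)$ as a flow network and invoking integral max-flow--min-cut ``as in \cref{elemma}'' does not transfer to this lemma. Max-flow works in \cref{elemma} precisely because the target vertices $T$ are \emph{unconstrained}: one solves a single-commodity $\psi(S)\to\partial B_R$ flow and reads the pairing off the flow decomposition. In \cref{kblemma} \emph{both} endpoints of every edge are prescribed on $\partial B^n_R$, so this is a multicommodity routing problem with $|V\Gamma|/2$ distinct source--sink pairs, and integral multicommodity routing has no max-flow--min-cut theorem; a cut bound does not yield an integral routing, so \cref{mfmclemma} cannot be invoked the way you suggest.

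A smaller issue is the ``perturb $\psi$ to grid vertices'' step. The lemma's wording omits a $\gtrsim 1$-separation hypothesis on $\psi$ that is actually essential (the author's gloss right after the lemma, ``as long as they are sufficiently spaced out,'' makes this explicit, and the application in Step~3 of the proof of \cref{mlemma} supplies it). If $\psi$ piles terminals into a small region, no small perturbation snaps them to distinct grid vertices, and no $1$-thick $\Psi$ extending the original $\psi$ can exist at all; moreover, any perturbation must be by $\lesssim_n 1$ so that $\Psi$ can still be made to extend the original $\psi$ by appending short grid segments. With separation granted, your average-congestion arithmetic and the random-intermediate-hub (Valiant-style) routing are indeed the right starting points; the passage from average to pointwise congestion is where the real content of \cite{KB,GQ} lies, and I would be cautious about attributing the hierarchical ``doorway'' recursion to those references specifically, as their argument is probabilistic rather than a recursive cut decomposition.
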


This lemma roughly says that if want to map a graph of bounded degree, $\Gamma$, into $B_R$, without too many edges intersecting any unit ball, then we are free to chose the positions of verticies on $\partial B_R$, as long as they are sufficiently spaced out. Kolmogorov and Barzdin proved this lemma using a probabilistic method, and their bounds are sharp for expander graphs. For the proof of $\cref{mlemma}$ we will sometimes be forced to map vertices into the interior of $B_R$, which presents an obstacle for their methods. The second lemma gives us more flexibility in mapping half of the vertices of $\Gamma$  at the cost of having less flexibility over where the other half of the vertices gets mapped to.

\begin{lemma} \label{elemma} Let $\Gamma$ be the bipartite graph on two sets of vertices $S$ and $T$ with each vertex having degree 1. That is, $\Gamma$ is a set of $|S| = |T|$ disjoint edges. Suppose we are given a 1-thick map $\psi: S \to B^n_R$ so that,

$$|\psi(S) \cap B_r| \lesssim_n r^{n-1}$$ 

for each $B_r \subset B_R$ with $r>1/2$. Then we can extend $\psi$ to a 1-thick map, $$\Psi: (\Gamma, T) \to (B_R, \partial B_R)$$
\end{lemma}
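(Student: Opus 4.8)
The plan is to recast the extension problem as an integral flow problem on the unit grid $G(B_R,1)$ and solve it with the max-flow min-cut theorem (the role foreshadowed in the introduction). Build a network: adjoin a super-source $\sigma$ joined to every grid vertex $p\in\psi(S)$ by an edge of capacity $m_p:=\#\psi^{-1}(p)$; give every edge of $G(B_R,1)$ capacity equal to a large integer constant $C=C(n)$ to be fixed below; and adjoin a super-sink $\tau$ joined to every vertex of $\partial B_R$ by an edge of large capacity. If this network admits a flow of value $|S|$ then, all capacities being integers, max-flow min-cut produces an integral such flow, which decomposes into $|S|$ unit source-to-sink paths. A vertex $p\in\psi(S)$ is the start of exactly $m_p$ of these paths, so we match them bijectively with the $m_p$ edges of $\Gamma$ incident to $\psi^{-1}(p)$: if the path $P_s$ is matched with the edge $st_s$ (with $t_s\in T$ the far endpoint), set $\Psi|_S=\psi$, let $\Psi(t_s)$ be the $\partial B_R$-endpoint of $P_s$, and let $\Psi$ carry $st_s$ along $P_s$. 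Then $\Psi$ extends $\psi$, maps $T$ into $\partial B_R$, and the preimage of any grid edge $e$ meets only those $st_s$ with $e\in P_s$, of which there are at most $C$ since $C$ bounds the flow across $e$; hence $\Psi$ is $1$-thick. (A few $\psi(s)$ may already lie on $\partial B_R$, giving a trivial $P_s$; this is repaired by a harmless length-one detour, or avoided by translating the grid slightly.)

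Everything thus reduces to the cut condition. A finite-capacity cut cannot separate a vertex of $\partial B_R$ from $\tau$, so it is determined by the set $A$ of grid vertices of $B_R$ on the $\sigma$-side, and a direct count shows its capacity equals $|S|-\#\{s\in S:\psi(s)\in A\}+C\,|\partial A|$, where $\partial A$ is the set of edges of $G(B_R,1)$ with exactly one endpoint in $A$. Since the total capacity out of $\sigma$ is $|S|$, the max flow equals $|S|$ as soon as
\[ \#\{s\in S:\psi(s)\in A\}\ \le\ C\,|\partial A|\qquad\text{for every finite set $A$ of grid vertices,} \]
and we then take $C$ to be the constant this inequality furnishes. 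This estimate is the combinatorial heart of the lemma, and the step I expect to be the main obstacle: a naive decomposition of $A$ into shells according to distance to its complement bounds the mass of $\psi(S)$ in each shell by $|\partial A|$, but loses a logarithmic factor in the number of shells.

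To prove the estimate cleanly I would pass to the continuum. Let $\mu=\sum_{s\in S}\delta_{\psi(s)}$, so the left side is $\mu(A)$; the hypothesis (read with multiplicity — legitimate since only $\lesssim_n 1$ points of $S$ share any image, and propagated to all balls using that $\psi(S)\subset B_R$ is contained in a convex set) gives $\mu(B(x,r))\lesssim_n r^{n-1}$. Smear the mass $m_p$ uniformly over the closed unit cube $Q_p$ centred at $p$ to obtain $\tilde\mu$; now $\tilde\mu(B(x,r))\lesssim_n r^{n-1}$ for \emph{all} $r>0$. With $\hat A=\bigcup_{p\in A}Q_p$, the cubes tiling $\mathbb{R}^n$ give $\tilde\mu(\hat A)=\mu(A)$ exactly, and $\mathrm{Per}(\hat A)$ is comparable up to a constant $C(n)$ to $|\partial A|$. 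So it suffices to prove the trace inequality $\tilde\mu(\hat A)\lesssim_n\mathrm{Per}(\hat A)$, valid for any bounded finite-perimeter set. For each density-one point $x$ of $\hat A$ let $\rho(x)$ be the largest radius with $|B(x,\rho(x))\setminus\hat A|\le\tfrac12|B(x,\rho(x))|$ (finite since $\hat A$ is bounded), so that $\min\big(|\hat A\cap B(x,\rho(x))|,|B(x,\rho(x))\setminus\hat A|\big)=\tfrac12|B(x,\rho(x))|$ and the relative isoperimetric inequality on $B(x,\rho(x))$ gives $\rho(x)^{n-1}\lesssim_n\mathrm{Per}\big(\hat A;B(x,\rho(x))\big)$. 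Apply the Vitali $5r$-covering lemma to $\{B(x,\rho(x))\}_x$ to extract a disjoint subfamily $\{B(x_i,\rho(x_i))\}$ with $\hat A\subseteq\bigcup_i B(x_i,5\rho(x_i))$ up to a null set; then
\begin{align*}
\tilde\mu(\hat A)\le\sum_i\tilde\mu\big(B(x_i,5\rho(x_i))\big)
&\lesssim_n\sum_i\rho(x_i)^{n-1}\\
&\lesssim_n\sum_i\mathrm{Per}\big(\hat A;B(x_i,\rho(x_i))\big)\ \le\ \mathrm{Per}(\hat A),
\end{align*}
the last step by disjointness. This yields the estimate with an explicit $C(n)$ and hence proves \cref{elemma}.

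The point of the "half-filled radius" $\rho(x)$ is that it forces the scale at which the $(n-1)$-dimensional growth of $\mu$ is invoked to coincide with the scale at which the local perimeter of $\hat A$ is measured, so the factors $\rho(x_i)^{n-1}$ cancel with no summation and no logarithmic loss — precisely what a plain shell decomposition fails to achieve. The remaining ingredients — integrality of the extremal flow and its path decomposition, the treatment of sources already on $\partial B_R$, and the identification of the cubical perimeter with $|\partial A|$ up to boundary effects near $\partial B_R$ — are routine.
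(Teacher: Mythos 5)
Your proof and the paper's both hinge on the same two ingredients — the integral max-flow min-cut theorem on the unit grid, and a ``trace'' estimate saying a measure with $(n-1)$-dimensional ball growth has mass through a set controlled by that set's perimeter — so at the structural level you have rediscovered the intended argument. The interesting differences are in how each ingredient is implemented.

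For the flow step, you compute the capacity of a vertex cut $A$ directly, getting the clean identity $|S|-\#\{s:\psi(s)\in A\}+C|\partial A|$, which reduces everything to $\#\{s:\psi(s)\in A\}\le C|\partial A|$. The paper instead runs an induction on $|S|$ to dispose of the case where the minimal cut uses a source edge, which is effectively bookkeeping for the same cancellation you perform explicitly; your version is a bit slicker. One small thing you should keep straight: the paper allows $\psi$ to be non-injective (it records multiplicities $|\psi^{-1}(v)|$ as source capacities), and the hypothesis $|\psi(S)\cap B_r|\lesssim_n r^{n-1}$ should be read with multiplicity for the flow count to match $|S|$, which is also what you tacitly do.

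For the perimeter estimate, the paper states it as a standalone lemma (``$|S\cap U|\lesssim_n \mathrm{vol}(\partial U)$ for any open $U$'') proved via a good-ball radius $t(p)$ defined by a sphere-slicing condition $\mathrm{vol}(\partial B_r(p)\cap U)<\delta r^{n-1}$, a Federer--Fleming random cone projection to convert $\mathrm{vol}(\partial B(p)\cap U)$ into $\mathrm{vol}(B(p)\cap\partial U)$, and the Besicovitch covering theorem. You instead smear the counting measure to $\tilde\mu$ with $\tilde\mu(B_r)\lesssim_n r^{n-1}$ at all scales, use the ``half-filled radius'' $\rho(x)$ so that the relative isoperimetric inequality on $B(x,\rho(x))$ directly furnishes $\rho(x)^{n-1}\lesssim_n \mathrm{Per}(\hat A;B(x,\rho(x)))$, and close with Vitali's $5r$-covering and disjointness. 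Both arguments are correct and avoid the logarithmic loss you identify; yours replaces the Federer--Fleming projection trick with the relative isoperimetric inequality, which some may find more transparent. The remaining technicalities you wave at — sources already on $\partial B_R$ giving degenerate paths, and the propagation of the ball estimate to balls that poke out of $B_R$ so that $\tilde\mu(B_r)\lesssim_n r^{n-1}$ holds unconditionally — deserve a sentence each (the latter needs a short geometric argument exploiting convexity of $B_R$ and the fact that $\psi(S)$ sits on the lattice), but these same points are left implicit in the paper's own good-ball/Besicovitch step, where some covering balls inevitably escape $B_R$ as well; they do not represent a gap peculiar to your route.
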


The proof of \cref{elemma} combines the min-cut max-flow theorem with a good ball type argument used in a proof of the isoperimetric inequality (see \cite{W}). We use \cref{kblemma} to reconnect the edges between the balls, which we cut our graph into, and we will use \cref{elemma} to ensure that the edges adjacent to $\partial \Gamma$ get mapped to edges adjacent to $\partial B_R$. This completes the outline of \cref{mlemma}.

\par Let's now sketch how \cref{mlemma} implies \cref{mthm}. Let $W \subset B_R^n$ be a $\omega/10$ neighborhood of $\psi(G(U, \omega))$ where $\psi$ is as in \cref{mlemma}. We can construct a $C_1 \lesssim_n 1$ bilipschitz embedding from $W$ to $U$. Because we are in dimensions $\ge 4$, any two maps from a graph into $B_R^n$ are isotopic. This means we can extend $\psi^{-1}$ to a Lipschitz map $F_1: B^n_R \to U$. Let $\Phi: B^n_R\to B^n_R$ be a $C_2 \lesssim_n 1$ Lipschitz map that contracts the complement of $F_1(W)$ to an $(n-2)$-dimensional set and let $F = \Phi \circ F_1$. Observe that $F(B^n_R - W)$ is a subset of an $(n-2)$-dimensional set and that $F$ is $C_1C_2$ Lipschitz on $W$. Putting these observations together we see that $Dil_{n-1}(F) \lesssim_n 1$.
\\ \par
In $\cite{GC}$, Guth showed that if $k$ is small compared to the dimension of the domain, there are some topological restrictions to the existence of maps with small $k$-dilation. This suggests that \cref{mthm} might fail in lower dimensions. We show that there are indeed 3-dimensional sets for which \cref{mthm} fails to hold.

\begin{theorem} \label{wthm} For any $N \in \mathbb{N}$, there is an open set $U_N \subset \mathbb{R}^3$, diffeomorphic to a ball and with $ vol_2(\partial U_N) \lesssim 1$, so that any $F: (B^3_1, \partial B^3_1) \to (U_N, \partial U_N)$, with degree 1 on the boundary, must have $ Dil_2(F) \gtrsim N^{1/2}$. 
\end{theorem}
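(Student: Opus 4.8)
The plan is to produce $U_N$ first and then, for an arbitrary degree $1$ map $F$, to bound $Dil_2(F)$ from below by combining the coarea inequality with a winding number estimate. Almost all of the difficulty is in the construction. The set $U_N$ should be a thin regular neighborhood of a contractible complex embedded in $B^3_1$ in a spread‑out way that carries a winding (or linking) complexity of order $N$, while still satisfying $vol_2(\partial U_N)\lesssim 1$ and remaining diffeomorphic to a ball. Morally, $U_N$ should be a set for which the thick embedding $\psi\colon G(U_N,\omega)\to B_R$ produced by \cref{mlemma} (which exists already for $n=3$) is forced to be knotted, so that the isotopy step used to pass from \cref{mlemma} to \cref{mthm} when $n\ge 4$ genuinely fails — and fails by a factor $\gtrsim N^{1/2}$, the Kolmogorov--Barzdin exponent in three dimensions. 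I expect pinning down the construction to be delicate: ``obvious'' candidates such as a thin tube coiled $N$ times, a thin spiral slab, or a ball with $N$ thin inward spikes all turn out to admit degree $1$ maps with $Dil_2\lesssim 1$, because intrinsically they are bi‑Lipschitz to a box; so $U_N$ must really exploit a three–dimensional knotting phenomenon rather than merely being ``long and thin''.

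Granting such a $U_N$, here is how the lower bound should go. First, since $F$ has degree $1$ on the boundary its local degree is constant on the connected set $U_N\setminus F(\partial B^3_1)$ and equals $1$ near $\partial U_N$, so $F$ is surjective onto $U_N$. Adapted to the construction, slice $U_N$ by a one–parameter family of co‑oriented, properly embedded cross–sectional disks $\{D_t\}_{t\in[0,L]}$ with $L\approx N$, realized as the regular level sets of a function $\phi\colon U_N\to[0,L]$ that can be arranged to be $\lesssim_{}1$–Lipschitz. Put $g=\phi\circ F$. Surjectivity forces $F(g^{-1}(t))\supseteq D_t$, hence $vol_2(g^{-1}(t))\ge vol_2(D_t)/Dil_2(F)$ for every $t$. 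The coarea inequality — applied not to $g$ directly but to $F$ composed with a two–dimensional ``cross–sectional projection'' of $U_N$, so that the integrand is a $2$–Jacobian controlled by $Dil_2(F)$ rather than by the (uncontrolled) Lipschitz constant of $F$ — bounds the total size $\int_0^L vol_2(g^{-1}(t))\,dt$ from above by a constant multiple of $Dil_2(F)$. The remaining ingredient is a matching lower bound for that total size, and this is exactly the short winding number computation: the disks $D_t$ wind $\approx N$ times around the skeleton of $U_N$, and a winding argument shows this winding cannot be realized or ``undone'' cheaply by the preimages $g^{-1}(t)$ inside the ball $B^3_1$, forcing those preimages to be large enough that $\int_0^L vol_2(g^{-1}(t))\,dt\gtrsim N^{1/2}$. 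Comparing the two bounds gives $Dil_2(F)\gtrsim N^{1/2}$.

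The main obstacle, and the place where the construction must be chosen just right, is that the winding quantity entering the computation must not be one that collapses to $vol_2(\partial U_N)$. Any estimate that pulls a $2$–form back along $F$ and integrates over a surface with boundary on $\partial B^3_1$ reduces by Stokes to an integral over $\partial U_N$, which is $\lesssim 1$ and therefore only yields $Dil_2(F)\gtrsim 1$; the winding must be detected by the interior of $U_N$, which is precisely what ``not bi‑Lipschitz to a ball better than scale $N^{1/2}$'' should encode. So the real content of the proof is: exhibit $U_N$ with $vol_2(\partial U_N)\lesssim 1$, diffeomorphic to a ball, whose interior slicing has a winding of order $N$ that survives pullback, and carry out the winding number bookkeeping showing it defeats every degree $1$ map with $Dil_2\lesssim N^{1/2}$. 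Once $U_N$ is fixed, I expect both the coarea step and the winding computation to be as short as the abstract promises.
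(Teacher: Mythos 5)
There is a genuine gap, and it is exactly the one you flag as the hard part: you never pin down the construction, and the candidate you do describe points in the wrong direction. You propose $U_N$ to be a thin regular neighborhood of a contractible complex that carries winding of order $N$, and you correctly observe that a thin coiled tube fails because it is intrinsically bi-Lipschitz to a long box. But you stop there. The paper's $U_N$ is not a thin set at all: it is $B_1^3 - T$, the unit ball with a thin complex $T = T_1 \cup T_2$ removed, where $T_1$ is a thin tube that coils $N$ times around the $z$-axis inside an annular shell and reaches the sphere at one point, and $T_2$ is a thin plate glued to $T_1$ at one point that fills the middle. So the winding lives in the \emph{complement}, which is still a topological ball, has volume comparable to $vol(B_1^3)$, and has $vol_2(\partial U_N)\lesssim 1$ because $T$ is thin. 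That is the missing idea, and without it the rest of your plan has nothing to attach to.

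The downstream argument also differs from yours in a way that matters. You propose to slice $U_N$ by disks $D_t$, set $g=\phi\circ F$, and compare $\int_0^L vol_2(g^{-1}(t))\,dt$ from two sides. But coarea for a scalar $g$ produces $\int vol_2(g^{-1}(t))\,dt = \int|\nabla g|$, which is controlled by $Lip(F)$, not by $Dil_2(F)$; and when you instead compose with a $2$-dimensional projection (as you suggest in passing) the level sets become curves, not surfaces, so the integrand $vol_2(g^{-1}(t))$ in your display no longer makes sense. The paper resolves this cleanly: it applies coarea to $\pi\circ F$ (with $\pi$ the vertical projection to $B_1^2$) restricted to a preimage region, obtaining a single short curve $\gamma'\subset B_1^3$ of length $\lesssim D$ with endpoints on $\partial B_1^3$, one of whose images lands on $T_2$ and the other on the upper hemisphere. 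Closing $\gamma'$ to a loop $\gamma$ by a boundary geodesic $\gamma''$ and filling by a cone gives a disk $\Sigma$ with $area(\Sigma)\lesssim D$. Because $F(\gamma'')\subset\partial U_N$ must travel from $T_2$ to the sphere along $\partial T_1$, winding $N$ times, the projected curve $(\pi\circ F)(\gamma)$ has winding number $\gtrsim N$ about an annulus of definite area, so $area(F(\Sigma))\ge area(\pi\circ F(\Sigma))\gtrsim N$. Combining with $area(F(\Sigma))\le D\cdot area(\Sigma)\lesssim D^2$ yields $D\gtrsim N^{1/2}$. Note the $N^{1/2}$ drops out of the quadratic interplay between the filling and its image; it is not a Kolmogorov--Barzdin exponent and does not arise from an integral over $t$. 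Your concern about a Stokes collapse to $vol_2(\partial U_N)$ is well taken as a sanity check, but the paper's argument sidesteps it because the constraint is the winding of the \emph{boundary arc} $F(\gamma'')$, which is forced by topology, not by a pulled-back $2$-form.
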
 

Our $U_N$ will be an open set whose boundary winds around some axis $N$ times. Using the coarea formula we will show that any map that does not increase area from $B^3$ to $U_N$ must map a disc with small area to a disc which winds $N$ times around some axis. Then a winding number computation will imply that the area of this image of this disc is $\gtrsim N^{1/2}$.\\ \par
We end this section with some open questions,

\begin{question} The isoperimetric inequality holds for cycles of any dimension in $\mathbb{R}^n$. Is there an analogue of \cref{mthm} in higher codimensions?  More precisely, Let $W \subset \mathbb{R}^n$ be the image of an embedding (or immersion) of $\partial B^{m}$ into $\mathbb{R}^n$. For $R = vol_{m-1}(W)^{1/(m-1)}$ can we construct a map, 

$$F: (B^m_R,  \partial B^m_R) \to (\mathbb{R}^n, W)$$

with degree 1 on the boundary and $Dil_{m-1}(F) \lesssim_n 1$? 
\end{question} 

\begin{question} Are there similar isoperimetric inequalities for other $k$-dilation? For instance, suppose $U$ is as in \cref{mthm} and we are given a degree 1 map $f: \partial B^n_R \to \partial U$ with $ Dil_k(f) \le 1$ for some $k>0$. For which $k'$ can we construct a $F: (B^n_R, \partial B^n_R) \to (U, \partial U)$, with degree 1 on the boundary, and with $ Dil_{k'}(F) \lesssim_n 1$?
\end{question}

\begin{question} Suppose we are given a graph of bounded degree, $\Gamma$, and let  $Spec(\Gamma)$ be the spectrum of its laplacian. We wonder what is the smallest $R(Spec(\Gamma))>0$ so that there is there a 1-thick embedding, 
$$\psi: \Gamma \to B^3_{R(Spec(\Gamma))}$$
Are there some $\Gamma_1, \Gamma_2$ with similar spectra, but $R(Spec(\Gamma_1)$ is much bigger than $R(Spec(\Gamma_2)$? 
 \end{question}
 
\par 
This paper is organized as follows. In \cref{s1} we give a proof of \cref{mlemma} and show how it implies \cref{mthm}. In \cref{s2} we prove \cref{elemma} using min-cut max-flow. In \cref{s4} we present the example in dimension 3 from \cref{wthm}.
\section*{Acknowledgements}

The author would like to thank Larry Guth for suggesting the isoperimetric inequality involving $k$-dilation and several other valuable discussions that led to this paper.

\section{Proof of Theorem 1} \label{s1}

\begin{proof}[Proof of \cref{mlemma}]
Our strategy is to partition $U$ by cutting it into well-behaved subsets. Then we will isometrically embed these subsets into disjoint balls near $\partial B_R$. In partitioning $U$ we had to cut some edges. Using the Kolmogorov-Barzdin theorem we can reconnect these edges inside $B_R$. Finally, we will add edges connecting the image of $\partial \Gamma$ to $\partial B_R$ using \cref{elemma} from \cref{s2}. 
\\ \par
There is a technical issue we have to address before moving on with the outline above. We will construct our partition of $U$ inductively by cutting away pieces of $U$. The worry is that after cutting many pieces off from $U$ we may be left with a set that is much less regular than the one we started with. In order to avoid this we will take our set $U$ and all of the pieces in the partition to be unions of $n$-cells from a lattice. 
\par Let $G_{\omega}$ be a uniform $n$-lattice of side length $\omega$ in $\mathbb{R}^n$. For small enough $\omega>0$ there is a $C\lesssim_n 1$ bilipschitz map from $\overline U$ to a union of closed $n$-cells from $G_{\omega}$, with degree 1 on the boundary. Since our problem is scale invariant and invariant under $C\lesssim_n 1$ bilipschitz maps we can take $\omega=1$ and assume that $\overline U$ is a union of closed $n$-cells from $G_1$, without loss of generality. 

\begin{definition} A $G$-ball, denoted $B_r^G(p)$, is the union of closed $n$-cells from $G_1$ that have non-trivial intersection with $\overline{B_r(p)}$. For ease of notation we will sometimes omit the explicit dependence on the center of the ball and simply write $B_r^G$.
\end{definition}

We introduce the following notion of a good $G$-ball,
\begin{definition} Later in the proof we will fix constants $A > 1, \tilde A>1$ and $0 < \delta = \delta(A) < 1$ that only depend on $n$. Call $B^G_r$ a good $G$-ball for an open set $W \subset \mathbb{R}^n$ if the following conditions hold:
 \begin{equation} \label{eq:1}
vol(\partial W \cap B^G_r) > A\,vol(W \cap \partial B^G_r)
\end{equation}

\begin{equation} \label{eq:2} 
 vol(\partial W \cap B^G_r) > \delta r^{n-1}
 \end{equation}
 
\begin{equation} \label{eq:3}
\text{For $s>1/2$ and every ball  } B^G_s \subset B^G_r \text{,  }
 vol(\partial W \cap B^G_s) < \tilde A  A s^{n-1}
 \end{equation}
 \end{definition}

Condition \ref{eq:1} tells us that if we remove $W \cap B^G_r$ from $W$, then we decrease its boundary. This will be important in inductively constructing our partition. Condition \ref{eq:2} says that the volume of the $\partial B^G_r$ is still comparable to the volume of $\partial W \cap B^G_r$. This will help us fit all the balls we used to partition $W$ into disjoint balls near $\partial B_R$. Condition \ref{eq:3} says that $\partial W$ is not too concentrated anywhere inside our ball, which will become important when we want to use \cref{elemma} later on. \\ \par

The main feature of our partition of $U$ will be that each piece lies inside a good $G$-ball and the sum of the boundaries of these $G$-balls is not much bigger than $vol(\partial U)$. We show that such a partition exists in the following lemma.

\begin{lemma} \label{plemma} Let $U$ be a union of $n$-cells from $G_1$. There exists a partition of $U$, denoted by $\mathcal{P} = \{U_i\}$, and a set of good $G$-balls for $U$, denoted by $\{B^G_{r_i}\}$, so that,
$$U_i \subset B^G_{r_i}$$
$$|\partial \mathcal{P}| := \sum r_i^{n-1} < A'\, vol(\partial U)$$

\noindent where $A'>1$ is some large constant that only depends on $A$ and $\delta$. 
\end{lemma}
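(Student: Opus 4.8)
The plan is to build the partition greedily: peel off a piece of the current set lying in a single good $G$-ball, recurse on what is left, and control the total cost by a telescoping argument. Set $W_1 = U$; given a nonempty union of cells $W_i$, I would first produce a good $G$-ball $B^G_{r_i}$ for $W_i$ (see the next paragraph), then put $U_i = W_i \cap B^G_{r_i}$ and $W_{i+1} = W_i \setminus B^G_{r_i}$ (again a union of cells), stopping once $W_{i+1} = \emptyset$. Since $B^G_{r_i}$ meets $\partial W_i$ by condition \eqref{eq:2}, it contains a whole $n$-cell of $W_i$, so the number of cells in $W_{i+1}$ strictly decreases and the process terminates; the $U_i$ are then pairwise disjoint (up to cell faces) with $\bigcup_i U_i = U$, and each $U_i \subset B^G_{r_i}$. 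For the size bound, deleting $W_i \cap B^G_{r_i}$ replaces the piece $\partial W_i \cap B^G_{r_i}$ of the boundary by at most $W_i \cap \partial B^G_{r_i}$, so conditions \eqref{eq:1} and \eqref{eq:2} give $vol(\partial W_{i+1}) \le vol(\partial W_i) - (1-1/A)\,vol(\partial W_i \cap B^G_{r_i}) < vol(\partial W_i) - (1-1/A)\,\delta\, r_i^{n-1}$; summing and using $vol(\partial W_i) \ge 0$ yields $\sum_i r_i^{n-1} < \tfrac{1}{(1-1/A)\delta}\,vol(\partial U)$, which is the assertion with $A' = \tfrac{1}{(1-1/A)\delta}$.

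Everything is thus reduced to showing that every nonempty bounded union of cells $W$ admits a good $G$-ball. To find one, I would pick a point $p$ in the relative interior of an $(n-1)$-face of $\partial W$ and grow a round ball $B_s(p)$, tracking $g(s) = vol(\partial W \cap B_s(p))$, $h(s) = vol(W \cap \partial B_s(p))$, and $V(s) = vol(W \cap B_s(p))$, so that $V' = h$ almost everywhere. Let $r$ be the smallest radius $s \ge s_0$ (with $s_0 \asymp_n 1$ large enough that $B_{s_0}(p)$ contains the $W$-cell adjacent to $p$) at which $g(s) > A\,h(s)$, i.e.\ the first scale where cutting decreases boundary. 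As long as this fails we have $g \le A h$, so the classical isoperimetric inequality applied to $W \cap B_s(p)$ gives $V^{(n-1)/n} \lesssim_n g + h \le (A+1)h = (A+1)V'$, hence $(V^{1/n})' \gtrsim_{n,A} 1$; integrating from $s_0$, where $V \ge 1$, forces $V(r) \gtrsim_{n,A} r^n$, and combined with $V(r) \le vol(W) \lesssim_n vol(\partial W)^{n/(n-1)}$ this bounds $r \lesssim_{n,A} vol(\partial W)^{1/(n-1)} < \infty$. Reading the same chain at the stopping radius, where $g(r) = A\,h(r)$, gives $g(r) \gtrsim_{n,A} r^{n-1}$, which becomes condition \eqref{eq:2} once $\delta = \delta(A)$ is chosen small enough; and $g(r) = A\,h(r) \le A\,vol(\partial B_r(p)) \lesssim_n A\, r^{n-1}$, so $g(r) < \tilde A A\, r^{n-1}$ once $\tilde A = \tilde A(n)$ is a large constant. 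Replacing $B_r(p)$ by the $G$-ball $B^G_r(p)$ (which lies between $B_r(p)$ and $B_{r+\sqrt n}(p)$ and has boundary area $\lesssim_n r^{n-1}$) costs only $n$-dependent constants, so $B^G_r(p)$ satisfies \eqref{eq:1}, \eqref{eq:2}, and \eqref{eq:3} in the case $B^G_s = B^G_r$ itself.

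It remains to force \eqref{eq:3} for the smaller sub-balls. If $B^G_r(p)$ contains a $G$-ball $B^G_s(q)$ with $s > 1/2$ and $vol(\partial W \cap B^G_s(q)) \ge \tilde A A\, s^{n-1}$, I would replace it by one of minimal radius $\rho$. Covering $B^G_\rho(q)$ by $\lesssim_n 1$ balls of half the radius, a pigeonhole shows minimality forces $vol(\partial W \cap B^G_\rho(q)) < 2\tilde A A\, \rho^{n-1}$, so this ball is only moderately concentrated; since $vol(W \cap \partial B^G_\rho(q)) \le vol(\partial B^G_\rho(q)) \lesssim_n \rho^{n-1}$ and $\tilde A$ is large, condition \eqref{eq:1} holds, condition \eqref{eq:2} holds because $vol(\partial W \cap B^G_\rho(q)) \ge \tilde A A\,\rho^{n-1} > \delta \rho^{n-1}$, and condition \eqref{eq:3} holds for every proper sub-ball by minimality of $\rho$. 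Either way a good $G$-ball exists, completing the reduction.

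The main obstacle — and where the real care lies — is twofold. First, the growth estimate in the middle paragraph must be carried out honestly at the level of $G$-balls, where the coarea identity $V' = h$ is only an approximation: one replaces derivatives by finite differences over the discrete set of radii at which $B^G_s(p)$ jumps, and checks that passing between round balls and cell-unions does not destroy the strict inequalities. Second, the three constants must be fixed in the correct order — first $A > 1$, then $\delta = \delta(A) \in (0,1)$ extracted from the growth estimate, then $\tilde A = \tilde A(n)$ large enough to absorb all the sphere-area bounds — so that conditions \eqref{eq:1}–\eqref{eq:3} and the threshold $\tilde A A\, s^{n-1}$ used to locate concentrated sub-balls are mutually consistent; this bookkeeping, rather than any single hard inequality, is the crux, and it is exactly what \cref{elemma} will later need in condition \eqref{eq:3}.
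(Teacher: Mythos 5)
Your overall strategy matches the paper's: greedily peel off a piece of $U$ contained in a good $G$-ball, recurse, and control $\sum r_i^{n-1}$ by the telescoping estimate that combines \eqref{eq:1} and \eqref{eq:2}. Your telescoping computation is correct and gives the same kind of constant. For locating a good ball, the paper splits into three cases (a ball violating \eqref{eq:3}; a small ball with $vol(\partial B^G_r\cap U)=0$; otherwise a stopping-time argument with threshold $vol(U\cap\partial B^G_r)<\delta r^{n-1}$). Your single growing-ball argument — stop at the first $s\ge s_0$ with $g(s)>Ah(s)$, then use the relative isoperimetric inequality to force $V(r)\gtrsim r^n$ and hence $g(r)\gtrsim_{n,A}r^{n-1}$ — merges the paper's Cases 2 and 3 and yields \eqref{eq:1}, \eqref{eq:2}, and the top-scale bound $g(r)\lesssim_n A r^{n-1}$ cleanly; this part is fine and arguably tidier than the paper's.

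There is, however, a genuine gap in how you repair condition \eqref{eq:3}. You pass to a concentrated sub-ball $B^G_\rho(q)$ of minimal radius, with $vol(\partial W\cap B^G_\rho(q))\ge\tilde A A\,\rho^{n-1}$, and declare it good. But condition \eqref{eq:3} quantifies over \emph{all} $B^G_s\subset B^G_\rho(q)$ with $s>1/2$, including $B^G_s=B^G_\rho(q)$ itself, and thus demands $vol(\partial W\cap B^G_\rho(q))<\tilde A A\,\rho^{n-1}$ — exactly what your ball is chosen to violate. You yourself flag that minimality gives \eqref{eq:3} only for \emph{proper} sub-balls; the top scale is not exempt, and the top-scale bound is in fact used later when the $|\psi(S)\cap B_r|\lesssim_n r^{n-1}$ hypothesis of \cref{elemma} is invoked for $B_r$ up to the size of the whole ball. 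Your pigeonhole estimate $vol(\partial W\cap B^G_\rho(q))<2\tilde A A\,\rho^{n-1}$ is consistent with this failure and does not close it.

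The fix, which is what the paper does in its Case~1, is to shrink: having found the minimal over-concentrated ball $B^G_t$, use $B'=B^G_{t-n}$ instead. Every $B^G_s\subset B'$ with $s>1/2$, including $B'$ itself, is then a proper sub-ball of $B^G_t$, so minimality delivers \eqref{eq:3}. The cost is that \eqref{eq:1} and \eqref{eq:2} must be re-established for the smaller ball, which requires an upper bound on $vol(\partial W\cap(B^G_t\setminus B'))$ so that $B'$ still captures most of the concentration; in the paper this appears as $vol(\partial U\cap B')>\tilde A A t^{n-1}-\tilde A t^{n-1}$, with $\tilde A$ chosen large so the right side exceeds $A\,vol(\partial B')$. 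Your argument needs this extra shrink-and-reverify step to be complete.
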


\begin{proof}[Proof of \cref{plemma}]
The proof is by induction on $vol(U)$. For the base case we consider $vol(U) = 1$ for which we can take $\mathcal{P} = \{U\}$. Note that $U$ lies in a good $G$-ball $B^G_2(p)$ for $p$ in the center of $U$. Condition \ref{eq:1} is satisfied because $vol(U \cap \partial B^G_r)=0$. Condition \ref{eq:2} is satisfied for small enough $\delta$, and condition \ref{eq:3} is satisfied for large enough $A$.

\par Now lets assume the inductive hypothesis holds for open sets with volume less than $vol(U)$ and show that we can find a $G$-good ball for $U$. To do this we will consider three cases.
\\ \\
\textbf{Case 1:} Suppose first that there is a ball $B^G_r$ with $vol(\partial U \cap B^G_r) > \tilde A A r^{n-1}$. Let $B^G_t$ be a minimal $G$-ball satisfying $vol(\partial U \cap B^G_t) > \tilde A A t^{n-1}$. In other words, there is no $B^G_s$ strictly inside $B^G_t$ that also satisfies the same inequality.  For $A$ large enough, we have $t > n$, and so $B^G_{t-n} \subsetneq B^G_{t}$ satisfies condition \ref{eq:3}. Set $B' = B^G_{t-n}$ and observe that for $\tilde A$ large enough,

$$vol(\partial U \cap B') > \tilde AA t^{n-1} - \tilde At^{n-1}>Avol(\partial B')$$

\noindent and so $B'$ satisfies conditions \ref{eq:1} and \ref{eq:2}. Thus, $B'$ is a good $G$-ball for $U$. For all the following cases we assume that $U$ satisfies condition \ref{eq:3}. 
\\ \\
\textbf{Case 2:} Suppose there exists a $p \in U$ and an $0 < r < \delta^{-1/(n-1)}$ so that $vol(\partial B^G_r(p) \cap U) = 0$. Condition \ref{eq:1} and \ref{eq:2} are easily satisfied, so $B^G_r(p)$ is a good $G$-ball for $U$. 
\\ \\
\textbf{Case 3:} Thus, we can take any $p \in U$ and assume that $vol(\partial B^G_r(p) \cap U) > 0$ for all $0 < r < \delta^{-1/(n-1)}$. Define,

$$t = \inf \{r \ge \delta^{-1/(n-1)}: vol(U \cap \partial B^G_r(p)) < \delta r^{n-1}\}$$

\noindent Let $B' = B^G_{t}(p)$. Suppose for contradiction that $vol(B' \cap \partial U) <  A\delta t^{n-1}$. Then by the isoperimetric inequality we have,
$$vol(B' \cap U) \lesssim_n (vol(B' \cap \partial U)+vol(\partial B' \cap U))^{n/(n-1)} \lesssim_n (A\delta)^{n/(n-1)} t^n$$
However, by the minimality property of $t$ we have,

$$vol(B' \cap U) \ge \sum_{k = 1}^{t/3-1} vol(\partial B^G_{3k}(p) \cap U) \gtrsim_n \delta t^n$$

\noindent Picking $\delta > 0$ small enough so that  $(A\delta)^{n/(n-1)} \lesssim_n \delta$ we get a contradiction. Thus, $vol(B' \cap \partial U) \ge  A\delta t^{n-1}$ and so $B'$ is a good ball.
\\ \par

We have now shown that we can always find a good $G$-ball $B' = B^G_t(p)$ for $U$. If $U \subset B'$ we complete the inductive step by taking $\mathcal{P} = \{B'\}$, otherwise let $U_2 = U - B'$. Since\\ $vol(U_2) < vol(U)$ we can apply the inductive hypothesis to $U_2$ to get a partition $\mathcal{P}_2$ of $U_2$, so that each element of the partition is inside a good $G$-ball, with,

$$|\partial \mathcal{P}_2|  < A' vol(\partial U_2) \le A'(vol(\partial U) - vol(\partial U \cap B') + vol(\partial B' \cap U)) $$
$$ < A'vol(\partial U) - A'(1-1/A)vol(\partial U \cap B') < A'vol(\partial U) - A'(1-1/A)\delta t^{n-1}$$

\noindent The first inequality comes from the inductive hypothesis, the third comes from Condition \ref{eq:1}, and the fourth from Condition \ref{eq:2}. Then we can take our partition of $U$ to be $\mathcal{P} = \mathcal{P}_2 \cup \{B' \cap U\}$ and compute,
$$|\partial \mathcal{P}| = |\partial \mathcal{P}_2| + t^{n-1} < A'vol(\partial U) - (A'(1-1/A)\delta-1) t^{n-1}< A'vol(\partial U)$$
where the last inequality holds for $A' = A'(A, \delta)$ large enough. This completes the inductive step and the proof of \cref{plemma}.
\end{proof}

We now begin describing the 1-thick map $\psi$ from the statement of \cref{mlemma}. This will be done in a few steps and Figure 1 below may be helpful for visualizing the steps. Let $\mathcal{P} = \{U_i\}$ and $\{B^G_{r_i}\}$ be the partition and good $G$-balls for $U$, given by \cref{plemma}. According to \cref{plemma},

$$\sum_i r_i^{n-1} \le A' vol(\partial U)$$

 So for some $C \lesssim_n 1$ large enough, and $R = C vol(\partial U)^{1/(n-1)}$, there exists points $\{p_i\} \subset B_R$ so that $\{B_{2r_i}(p_i)\}$ are disjoint balls inside $B_R$ with disjoint radial projections to $\partial B_R$. We also denote,
 
$$\Sigma = \bigcup_i \partial U_i$$

\noindent \textbf{Step 1:} We begin by describing our map $\psi$ on $\Gamma \cap U_i$. Let $\Gamma_i \subset \Gamma$ be the set of edges in $U_i$, that don't intersect $\partial U_i$. Since $\Gamma_i \subset B^G_{r_i}$ we can let $\psi$ map $\Gamma_i$ into $B_{2r_i}(p_i)$ isometrically, with some translation. 
\\ \\
\noindent \textbf{Step 2:}  Let $E_i$ be the set of edges in $\Gamma$ that intersect $\partial U_i$.  Each edge in $E_i$ will be mapped to a piece-wise linear path in $B_R$ composed of several parts. To do this lets recall the statement of \cref{elemma},

\begin{lemmarm} Let $\Gamma'$ be the bipartite graph on two sets of vertices $S$ and $T$ with each vertex having degree 1. Suppose we are given a 1-thick map $\psi: S \to B^n_{R'}$ so that,

$$|\psi(S) \cap B_r| \lesssim_n r^{n-1}$$ 

for each $B_r \subset B_{R'}$ with $r>1/2$. Then we can extend $\psi$ to a 1-thick map, $$\Psi: (\Gamma', T) \to (B_{R'}, \partial B_{R'})$$
\end{lemmarm}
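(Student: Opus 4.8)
The plan is to recast the extension problem as an integer flow on the unit lattice inside $B_R$ and to check the hypothesis of the max-flow min-cut theorem by means of the isoperimetric inequality. Work with the lattice graph $\Lambda=G(B_R,1)$, with vertex set $V\Lambda$ and boundary vertices $\partial\Lambda=V\Lambda\cap\partial B_R$, and for $v\in V\Lambda$ let $m(v)=\#\{s\in S:\psi(s)=v\}$. Build a network by adjoining a source $\mathfrak s$ with an edge $\mathfrak s\to v$ of capacity $m(v)$ for each $v\in V\Lambda$, a sink $\mathfrak t$ with an edge $w\to\mathfrak t$ of capacity $C$ for each $w\in\partial\Lambda$, and by giving every edge of $\Lambda$ capacity $C$, where $C=C(n)$ is a constant fixed below. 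All capacities are integers and the capacity out of $\mathfrak s$ is $|S|$, so once we know the network carries a flow of value $|S|$ we may take it integer-valued, discard its cycles, and decompose it into $|S|$ simple $\mathfrak s$--$\mathfrak t$ paths, each carrying one unit of flow. Distributing the $m(v)$ paths that leave $v$ among the $m(v)$ elements of $S$ mapped to $v$, reading such a path inside $\Lambda$ as a sequence of edges from $v=\psi(s)$ to its last vertex $w\in\partial B_R$, and declaring this path to be the image under $\Psi$ of the edge of $\Gamma$ at $s$ while setting $\Psi$ of its $T$-endpoint equal to $w$, produces the required map. It is $1$-thick because each edge of $\Lambda$ carries flow at most $C\lesssim_n1$ and hence lies on $\lesssim_n1$ of the paths. (Non-triviality of the paths, and the degenerate case $\psi(s)\in\partial B_R$, are arranged by attaching the sink to a thin lattice shell just outside $B_R$ and retracting, which changes nothing.)

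By max-flow min-cut it is enough to show that every cut has capacity at least $|S|$. A cut is given by a set $X\subseteq V\Lambda$ on the source side, and its capacity equals $\sum_{v\notin X}m(v)+C\,e(X,V\Lambda\setminus X)+C\,|X\cap\partial\Lambda|$, where $e(X,V\Lambda\setminus X)$ denotes the number of edges of $\Lambda$ joining $X$ to its complement. Since $\sum_{v\notin X}m(v)=|S|-\#\{s:\psi(s)\in X\}$, this capacity is at least $|S|$ as soon as
$$\#\{s\in S:\psi(s)\in X\}\ \le\ C\bigl(e(X,V\Lambda\setminus X)+|X\cap\partial\Lambda|\bigr),$$
so it suffices to prove the combinatorial isoperimetric estimate $\#\{s:\psi(s)\in X\}\lesssim_nQ$, with $Q:=e(X,V\Lambda\setminus X)+|X\cap\partial\Lambda|$, and then take $C$ to be at least the implied constant. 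Associating to $X$ the cubical region $\hat X\subset B_R$ (the union of the closed unit cubes centered at the vertices of $X$, with the obvious modification near $\partial B_R$), the interior $(n-1)$-faces of $\partial\hat X$ are dual to the cut edges counted by $e(X,V\Lambda\setminus X)$ and the faces lying along $\partial B_R$ correspond to $X\cap\partial\Lambda$, so $vol_{n-1}(\partial\hat X)\lesssim_nQ$, while $\#\{s:\psi(s)\in X\}$ is the number of points of $\psi(S)$ contained in $\hat X$. Everything thus reduces to the geometric claim: a finite union of unit cubes $\hat X\subset B_R$ contains $\lesssim_nvol_{n-1}(\partial\hat X)$ points of $\psi(S)$.

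This last claim is the good-ball argument from the standard proof of the isoperimetric inequality. For $p\in\psi(S)\cap\hat X$ put $r(p)=\sup\{r>0:vol(B_r(p)\cap\hat X)\ge\tfrac12vol(B_r(p))\}$, which is finite since $\hat X$ is bounded, and split the points of $\psi(S)\cap\hat X$ according to whether $r(p)\le1$ or $r(p)>1$. If $r(p)\le1$, then $B_2(p)$ meets $\hat X$ (it contains $p$) but is not contained in it, so it meets $\partial\hat X$; as $\partial\hat X$ is a union of $\lesssim_nvol_{n-1}(\partial\hat X)$ unit faces and each such face has $\lesssim_n1$ vertices of $\Lambda$ within distance $2$, each carrying $\lesssim_n1$ points of $\psi(S)$ (the density hypothesis at radius slightly above $1/2$ gives $m(v)\lesssim_n1$), this family contributes $\lesssim_nvol_{n-1}(\partial\hat X)$ points. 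If $r(p)>1$, then at $r=r(p)$ the ball is split evenly, $vol(B_{r(p)}(p)\cap\hat X)=vol(B_{r(p)}(p)\setminus\hat X)=\tfrac12vol(B_{r(p)}(p))$, so the relative isoperimetric inequality in $B_{r(p)}(p)$ gives $vol_{n-1}(\partial\hat X\cap B_{r(p)}(p))\gtrsim_nr(p)^{n-1}$. Using the Vitali covering lemma, extract a disjoint subfamily $\{B_{r(p_j)}(p_j)\}$ whose fivefold dilates still cover all these $p$; then by the density hypothesis applied to each $B_{5r(p_j)}(p_j)$ (of radius $>1/2$) together with disjointness of the $B_{r(p_j)}(p_j)$,
$$\#\{p:r(p)>1\}\ \le\ \sum_j\bigl|\psi(S)\cap B_{5r(p_j)}(p_j)\bigr|\ \lesssim_n\ \sum_jr(p_j)^{n-1}\ \lesssim_n\ \sum_jvol_{n-1}\bigl(\partial\hat X\cap B_{r(p_j)}(p_j)\bigr)\ \le\ vol_{n-1}(\partial\hat X).$$
Adding the two buckets yields $\#\{s:\psi(s)\in X\}\lesssim_nvol_{n-1}(\partial\hat X)\lesssim_nQ$, which gives the min-cut bound and hence the lemma.

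The heart of the argument, and the only place the density hypothesis on $\psi(S)$ is used, is this good-ball estimate for an arbitrary cubical region: without a bound on $\psi(S)$ valid at every scale one could not rule out a region of moderate perimeter that swallows too many points, and the interplay between Vitali selection and the relative isoperimetric inequality is exactly what converts ``few points in every ball'' into ``few points per unit of perimeter''. The remaining ingredients --- constructing the network, invoking integrality and flow decomposition, and identifying the min-cut capacity with $Q$ --- are routine, and the degenerate boundary cases are cosmetic.
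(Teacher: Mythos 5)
Your proof is correct and follows the same two-ingredient strategy as the paper --- reduce the extension problem to a max-flow/min-cut question on the unit lattice inside $B_R$, and discharge the min-cut bound by a good-ball argument that converts the ball-density hypothesis on $\psi(S)$ into a perimeter bound for an arbitrary cut region. The technical execution differs in both ingredients, and the differences are worth noting. For the flow network, the paper encodes the supplies and sinks directly in the lattice (capacity $|\psi^{-1}(v)|$ on edges out of $\psi(S)$, capacity $0$ on edges out of $\partial G$) and then needs an induction on $|S|$ to dispose of cuts that pass through an edge adjacent to $\psi(S)$; you instead adjoin an explicit source $\mathfrak s$ and sink $\mathfrak t$ with the appropriate capacities, which makes every cut of the form $\{\mathfrak s\}\cup X$ versus the rest and lets you read off the capacity identity $\sum_{v\notin X}m(v)+C\,e(X,V\Lambda\setminus X)+C\,|X\cap\partial\Lambda|$ in one line, eliminating the induction. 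For the good-ball argument, the paper's intermediate lemma uses the stopping radius $t(p)=\inf\{r:vol(\partial B_r(p)\cap U)<\delta r^{n-1}\}$, a minimal-filling argument on $\partial B_{t(p)}(p)$ via Federer--Fleming projection, and the Besicovitch covering theorem; you use the stopping radius at which $\hat X$ fills exactly half of $B_r(p)$, the relative isoperimetric inequality in a ball, and the Vitali $5r$-covering lemma. Your version is more elementary and self-contained (no Federer--Fleming projection needed) and folds the paper's two sub-lemmas into a single argument, at the modest cost of proving the isoperimetric-type claim only for cubical cut regions rather than for all open sets --- which is all that is needed. One point you should make explicit rather than parenthetical: your network can only carry a flow of value $|S|$ if $m(v)\lesssim_n 1$ for every vertex (since the flow out of $v$ is bounded by $C$ times the lattice degree), and the stated hypothesis $|\psi(S)\cap B_r|\lesssim_n r^{n-1}$ bounds only distinct image points, not multiplicities; this is harmless because in the application $\psi$ is injective on $S$, but it deserves a sentence. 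Likewise, when you apply the density hypothesis to $B_{5r(p_j)}(p_j)$ you should address the case where this ball is not contained in $B_R$ (cover $B_{5r(p_j)}(p_j)\cap B_R$ by boundedly many admissible balls, or use $r=R$ when $r(p_j)\gtrsim R$); the paper's version has the same unaddressed technicality, so this is cosmetic.
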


Let $S_i$ be the endpoints of $E_i$ in $U_i$. Notice that $S_i$ lies in the 1-neighborhood of $\partial U_i$, and since $B_{r_i}^G$ is a good G-ball for $U_i$ it satisfies condition \ref{eq:3}. Thus, the assumption of \cref{elemma} is satisfied for $S = S_i$, $B_{R'} = B_{2r_i}(p_i)$, and the $\psi$ we constructed in the previous step which is defined on $S_i$. We now use \cref{elemma} to construct the first part of these paths, which connect $S_i$ to $\partial B_{2r_i}(p_i)$.

\par The next part of these paths, will consist of a line segment connecting $\partial B_{2r_i}(p_i)$ with $\partial B_R$, in the radial direction. We finish Step 2 by applying this procedure for every $i$. Notice that this procedure  gives us a 1-thick embedding.
\par
Finally, notice that since $\partial \Gamma \subset \Sigma$, we have mapped every edge adjacent to $\partial \Gamma$ to a path with an endpoint in $\partial B_R$. 
\\ \\
\noindent \textbf{Step 3:} Since we can make a $C \lesssim_n 1$ bilpschitz perturbation of $\Gamma$ we can assume that $V\Gamma \cap \Sigma = \partial \Gamma$. Let $E_\Sigma$ be the set of edges that intersect $\Sigma$ but are not adjacent to $\partial \Gamma$. In this step, we'll finish defining $\psi$ on $E_{\Sigma}$.
\par
Each edge $e \in E_{\Sigma}$, has one endpoint $s_i \in U_i$ and another endpoint $s_j \in U_j$, for some $j \neq i$. In the previous step we started defining the image of $e$ to be two paths, one path going from $s_i$ to some $t_i \in \partial B_R$ and another path going from $s_j$ to some $t_j \in \partial B_R$. In order to finish defining our map on $e$ we need to find a path from $t_i$ to $t_j$. To extend $\psi$ to $E_{\Sigma}$, lets recall the statement of \cref{kblemma},

\begin{lemmarm} Let $\Gamma'$ be a graph which is a disjoint union of edges. Suppose that for $R^{n-1} \lesssim_n |V\Gamma|$ we are given an embedding,
$$\psi: V\Gamma' \to \partial B_R$$
Then we can construct a 1-thick map,
$$\Psi: \Gamma' \to B_R$$
which extends $\psi$.
\end{lemmarm}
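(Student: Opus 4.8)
The final statement is the Kolmogorov--Barzdin routing theorem recalled from \cite{KB, GQ}, so the plan is to reconstruct their argument: the probabilistic ``random channel'' routing, stratified by depth in $B_R$. Write the edges of $\Gamma'$ as wires $e_1,\dots,e_N$, so $|V\Gamma'|=2N$ and the hypothesis reads $R^{n-1}\lesssim_n N$; I also take the embedding to be spread out, meaning every unit ball meets $O_n(1)$ of the $2N$ points of $\psi(V\Gamma')$ (this is what ``sufficiently spaced out'' is for: without it the conclusion fails, e.g.\ when all $2N$ points lie in one unit ball). For a wire $e=\{v,w\}$ let $\ell_e$ be the distance along $\partial B_R$ from $\psi(v)$ to $\psi(w)$ and set its depth $d_e$ to be comparable to $\min(\ell_e,R/2)$, spreading the depths of the wires with $\ell_e\ge R/2$ over a band $[R/2,3R/4]$ so their arcs do not all land on one sphere. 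Route $e$ by a polygonal path in three parts: a radial segment from $\psi(v)$ inward to $\partial B_{R-d_e}$; an arc of length $O(d_e)$ along $\partial B_{R-d_e}$ joining the two radial images (possible, since radial projection contracts and so those images are within geodesic distance $O(d_e)$ on that sphere); and a radial segment back out to $\psi(w)$. Snapping each path to the unit grid $G(B_R,1)$ displaces points by $O_n(1)$ and yields a map $\Psi\colon\Gamma'\to G(B_R,1)$ extending $\psi$ and sending edges to non-trivial paths, so everything reduces to a congestion bound.

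Next I would bound the congestion at a unit ball $B_1(q)$ with $|q|=R-d$. The radial segments meeting it come only from wires with an endpoint radially over $q$, of which there are $O_n(1)$ by the spacing hypothesis, so radial segments contribute $O_n(1)$ uniformly in $q$. Only wires with $d_e\asymp d$ have their arc near radius $R-d$; there are at most $N$ of them, each arc has length $O(d)$, and, binning depths dyadically, they all lie in a shell of volume $\gtrsim_n d\,(R-d)^{n-1}$, so the average number of arcs per unit ball there is $O_n(N/(R-d)^{n-1})$, which is $O_n(1)$ because $(R-d)^{n-1}\gtrsim_n R^{n-1}\gtrsim_n N$ for $d\le R/2$ (and the deepest wires contribute total arc length $O(NR)$ inside a region of volume $\gtrsim_n R^n$, again average $O_n(1)$). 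To upgrade ``average $O_n(1)$'' to ``every unit ball $O_n(1)$'', give each wire's arc a uniformly random parallel channel among the $\sim (R-d_e)^{n-1}$ channels of its shell; the estimate just made shows every unit ball has expected congestion $O_n(1)$, so some choice achieves congestion $O_n(1)$ everywhere, which is the $1$-thickness we want.

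The hard part is precisely this derandomization --- passing from expected $O_n(1)$ to uniform $O_n(1)$ over the $\sim R^{n-1}$ unit balls of a shell without losing a logarithmic factor in the union bound. Kolmogorov and Barzdin handle it with a sharper independence/concentration analysis, using that at depth $d$ one routes only the short wires of length $\asymp d$, so each wire's contribution is concentrated enough that a Chernoff tail beats the union bound. Alternatively, and closer to the methods of this paper, one can replace the probabilistic step by a max-flow/min-cut argument in the spirit of \cref{elemma}: at each dyadic depth scale, set up an integer-flow problem sending the wires of that length through the channels of the corresponding shell, use the discrete isoperimetric inequality together with the spacing hypothesis to check that every cut has capacity at least the demand it separates (a cut isolating a region $X$ meets $\gtrsim_n (\min(\mathrm{vol}\,X,\mathrm{vol}(B_R\setminus X)))^{(n-1)/n}$ grid edges, while it can separate only $O_n(\mathrm{vol}\,X)$ wire endpoints), and read off $O_n(1)$-congestion integral routes from an integral maximum flow. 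In either route the radial portions are free, so all of the difficulty sits in the intermediate-depth shell routing.
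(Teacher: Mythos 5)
The paper does not prove this lemma; it is quoted verbatim from \cite{KB, GQ} (``proven in dimension three by Kolmogorov and Barzdin and generalized to higher dimensions by Guth''), so there is no in-paper argument to compare against and I am reviewing your sketch on its own terms. You are right that the formal statement needs a spacing hypothesis on $\psi$ --- the paper says this informally just after the lemma, and your $O_n(1)$-points-per-unit-ball condition is the correct formalization. The radial--arc--radial routing at depth comparable to wire length, together with the dyadic-shell volume count giving \emph{expected} congestion $O_n(1)$ per unit cell, is a sensible reconstruction of the Kolmogorov--Barzdin set-up. But you then explicitly stop at what you yourself identify as ``the hard part'': passing from average $O_n(1)$ to uniform $O_n(1)$. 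That step \emph{is} the theorem. The naive Chernoff-plus-union-bound over the $\sim R^{n-1}$ cells of a shell gives per-cell tails of order $e^{-ct\log t}$ and therefore forces a threshold $t\gtrsim \log R/\log\log R$; the sentence claiming the Chernoff tail ``beats the union bound'' restates the desired conclusion rather than proving it. This is a genuine gap, and neither of your two proposed routes closes it.

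Your max-flow alternative also has a structural flaw that is worth naming, since it explains why \cref{elemma} cannot simply be transplanted here. In \cref{elemma} the routing is \emph{single-commodity}: every source in $\psi(S)$ is equally happy to reach any sink on $\partial B_R$, so a single super-source/super-sink flow captures the demand and min-cut duality controls it. In the Kolmogorov--Barzdin lemma each wire has both endpoints on $\partial B_R$ and must be connected \emph{to its own designated partner}; distinct wires have distinct terminal pairs that cannot be merged into one $s$--$t$ problem. This is multi-commodity routing, for which max-flow/min-cut duality fails in general (exact duality is special to single-commodity and two-commodity flows; approximate versions lose logarithmic factors). So the cut-capacity check you sketch does not certify feasibility of the integral routing you want. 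To repair the proof you would need to carry out the actual Kolmogorov--Barzdin concentration argument in detail, or supply a genuinely different combinatorial scheme that achieves uniform $O_n(1)$ congestion deterministically.
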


Observe that we have,

$$|\partial E_{\Sigma}| \lesssim_n \sum_i |E_i| \lesssim_n  \sum_i |U_i| \lesssim_n  \sum_i r_i^{n-1} \lesssim_n R^{n-1}$$

where the second to last inequality follows from condition \ref{eq:3} of a good G-ball, and the last inequality comes from \cref{plemma}. Note that $\psi$ was defined on $\partial E_{\Sigma}$ in the previous steps. Thus, applying \cref{kblemma} with $\Gamma' = E_{\Sigma}$, we can extend our $\psi$ to a 1-thick map on $E_{\Sigma}$. This completes the construction of $\psi$.

\end{proof}

\begin{figure}[h]
\centering
\includegraphics[width=0.65\textwidth]{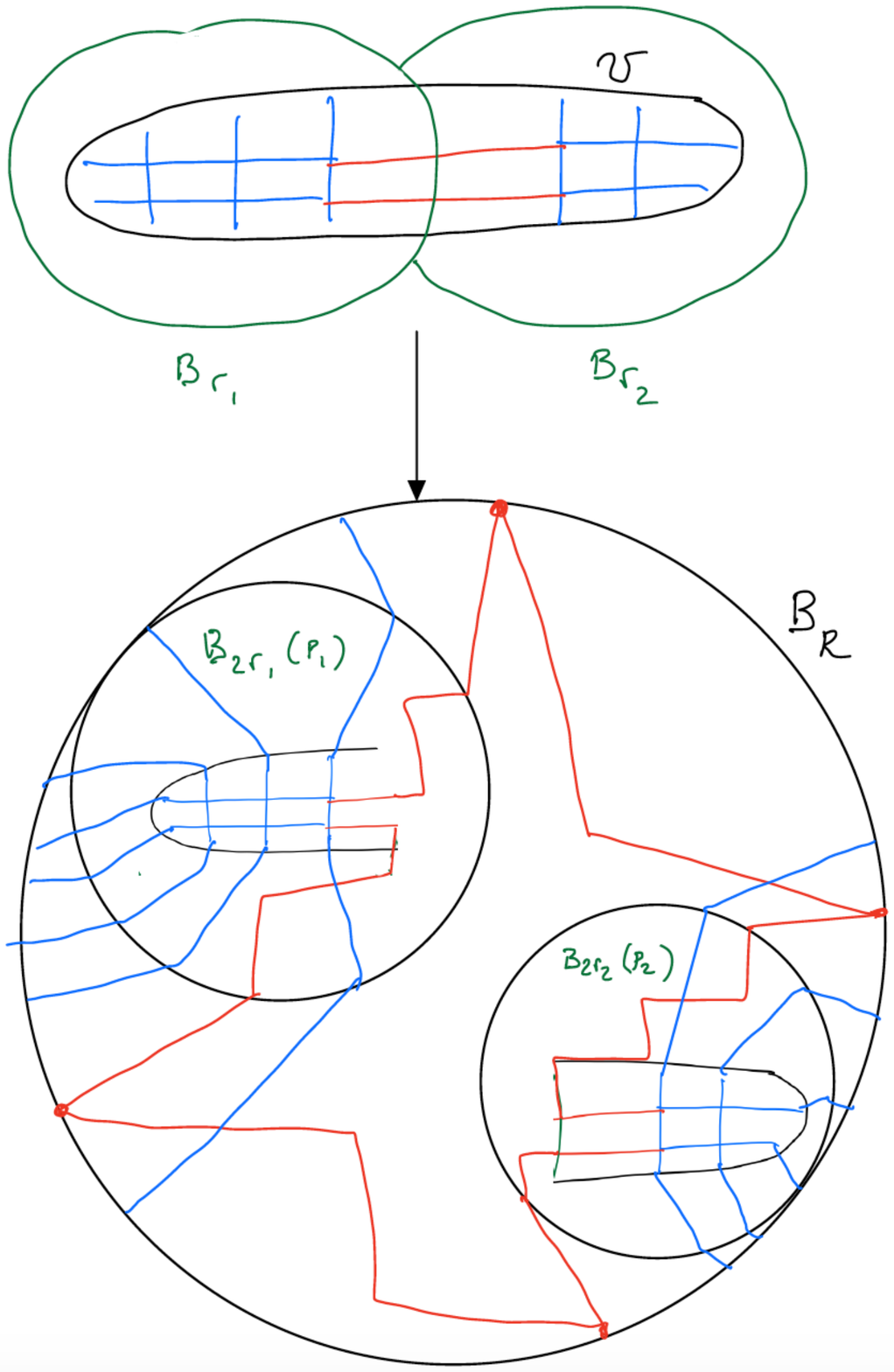}
\caption{Sketch of $\psi$ for the case of a partition consisting of 2 sets. G-good balls from \cref{plemma} are shown in green. $E_{\Sigma}$ is shown in red. The rest of the edges of $\Gamma$ are shown in blue.}
\end{figure}

We now give a technical lemma which allows us to modify a thick maps of graphs to an expanding embeddings of its neighborhood.

\begin{lemma} \label{tlemma} Let $U, U' \subset \mathbb{R}^n$ be open sets, which are unions of $n$-cells from $G_1$, and suppose $\psi: G(U, 1) \to U'$ is a $1$-thick map. For a graph $G$ embedded a subset in $\mathbb{R}^n$, denote its $r$-neighborhood as $N_r(G)$. Then for some $\delta \gtrsim_n 1$ and $r = 1/10$, there is a $\delta$-expanding embedding,
$$\Psi: N_{r}(G(U, 1)) \to U'$$
In other words, $|D\Psi(v)| \ge \delta |v|$ for every tangent vector $v$. This map satisfies, 
$$dist(\Psi(p), \psi(p)) \lesssim_n 1$$ for every $p \in G(U, 1)$.
\end{lemma}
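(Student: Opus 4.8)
The plan is to build $\Psi$ locally on pieces of $N_r(G(U,1))$ that map to edges of $G(U',1)$, and then glue. Since $\psi$ is $1$-thick, every edge $e$ of $\Gamma := G(U,1)$ is sent to a non-trivial embedded path $\psi(e)$ in $G(U',1)$ whose combinatorial length is bounded (it crosses $\lesssim_n 1$ edges of $G(U',1)$, so $\mathrm{length}(\psi(e)) \lesssim_n 1$), and conversely each edge of $G(U',1)$ has $\lesssim_n 1$ preimage edges. The $r$-neighborhood $N_r(e)$ of an edge $e$ is a thin cylinder $e \times D^{n-1}_r$; the $r$-neighborhood $N_r(v)$ of a vertex $v$ is a small ball. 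So the strategy is: first define $\Psi$ near the vertices of $\Gamma$, then interpolate along edges.

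First I would set up coordinates. Choose $r = 1/10$. Near a vertex $v$ of $\Gamma$, the set $N_r(v) \setminus (\text{edge neighborhoods})$ is a ball of radius $r$; I will send $v \mapsto \psi(v)$ (a vertex of $G(U',1)$) and send $N_r(v)$ to a small ball around $\psi(v)$ of radius $\delta r$ by a homothety (composed with a fixed embedding taking one ball to a ball in $U'$, which exists since $\psi(v) \in U' \subset \mathbb{R}^n$ and $U'$ contains the unit cell around $\psi(v)$). The one subtlety is that we must reserve, inside this small ball around $\psi(v)$, a collection of disjoint ``docking ports'' — one for each edge of $\Gamma$ incident to $v$. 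Since $\Gamma$ has degree $\lesssim_n 1$ (bounded by $2n$ plus boundary edges, a constant depending on $n$), we can fit $\lesssim_n 1$ disjoint cylinders of radius $\delta' r$ into the small ball, with $\delta' \gtrsim_n 1$. Next, for each edge $e = [v, w]$ of $\Gamma$, I would map the cylinder $N_r(e) = e \times D^{n-1}_r$ into a tubular neighborhood of the path $\psi(e) \subset G(U',1)$: parametrize $\psi(e)$ by arclength, map the $e$-direction of the cylinder onto $\psi(e)$ by an affine reparametrization (this contracts or expands by a factor comparable to $\mathrm{length}(\psi(e))/\mathrm{length}(e) \asymp_n 1$), and map each cross-sectional disc $D^{n-1}_r$ into the normal disc of $\psi(e)$ scaled by $\delta'' r$, matching the docking ports at the two ends. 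Because $\psi(e)$ is an embedded path crossing boundedly many unit edges, it has a tubular neighborhood of some radius $\rho \gtrsim_n 1$ inside $U'$ — here I would use that $\psi$ is $1$-thick so no edge of $G(U',1)$ is overloaded, hence the union of all these tubular neighborhoods (over all edges and vertices) can be taken pairwise disjoint with widths $\gtrsim_n 1$. Taking $\delta$ to be the minimum of the finitely many (dimension-dependent) contraction factors introduced above gives $|D\Psi(v)| \ge \delta|v|$, and the distance bound $\mathrm{dist}(\Psi(p), \psi(p)) \lesssim_n 1$ holds because each point of $N_r(p)$ is moved within a bounded tubular neighborhood of $\psi(p)$.

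The heart of the argument — and the step I expect to be the main obstacle — is ensuring global \emph{injectivity} of $\Psi$, i.e. that the local pieces around different vertices and edges of $\Gamma$ do not collide in $U'$. This is exactly where $1$-thickness of $\psi$ is essential: the condition that the preimage of each edge of $G(U',1)$ meets $\lesssim_n 1$ edges of $\Gamma$ bounds the ``multiplicity'' of $\psi$ over each unit edge of $G(U',1)$ by a constant $m = m(n)$. So over each unit edge of $G(U',1)$ we must route at most $m$ disjoint thin tubes; since a unit $n$-cell has room for $\gtrsim_n 1$ disjoint parallel tubes of radius $\gtrsim_n 1/m \gtrsim_n 1$, we can do this, and the choice of tube radius (hence of $\delta$) only depends on $n$. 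Formally I would (i) thicken $\psi$ slightly to a ``$1$-thick \emph{embedding}'' of $\Gamma$ into a subdivision of $G(U',1)$ by a bounded bilipschitz perturbation — spreading out the $\le m$ strands that run parallel through any cell into disjoint parallel strands — exactly as the paper does in Step 3 of the proof of Lemma~\ref{mlemma} (``we can make a $C\lesssim_n 1$ bilipschitz perturbation''), and (ii) then take $\Psi$ to be a uniform tubular neighborhood of that embedding, with the vertex balls and edge cylinders matched up along ports as above. Injectivity is then immediate since the perturbed image is an embedded graph and we took a genuine tubular neighborhood of it.

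A couple of bookkeeping remarks to finish. The boundary vertices $\partial\Gamma = \Gamma \cap \partial U$ are treated like interior vertices; nothing special is needed since the statement only asks for a map into $U'$, not a boundary-to-boundary map. To handle the edges that meet $\partial U$ and hence have a short ``stub'' before hitting the boundary, note $r = 1/10$ is small enough that $N_r(G(U,1)) \subset$ the $1/10$-neighborhood of the grid, which still sits inside a union of $n$-cells, so the local model near such edges is the same truncated cylinder and the construction goes through verbatim. Finally, since all the contraction/expansion factors introduced (homothety ratios on vertex balls, affine reparametrization ratios on edges, normal-disc scalings, and the bilipschitz constant of the perturbation) are bounded below by constants depending only on $n$, their minimum $\delta$ satisfies $\delta \gtrsim_n 1$, completing the proof.
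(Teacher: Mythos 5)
Your proposal is correct and takes essentially the same route as the paper: both arguments use the bounded multiplicity coming from $1$-thickness to spread the $\lesssim_n 1$ strands passing through each vertex/edge of $G(U',1)$ into disjoint parallel strands (the paper formalizes this by replacing each vertex of $G(U',1)$ with a complete graph $K_m$ and each edge with $m$ disjoint paths, lifting $\psi$ to a genuine embedding $\psi'$ into this auxiliary graph $T$; you describe it as a bounded bilipschitz perturbation into a subdivision), and then take a uniform tubular neighborhood of the resulting embedded graph inside $N_r(G(U',1))$, yielding a $\delta$-expanding embedding with $\delta \gtrsim_n 1$. The paper works at an enlarged scale $m'N_r(G(U',1))$ and rescales at the end, whereas you build at the original scale directly; this is only a cosmetic difference.
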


\begin{proof} Denote by $K_m$ the complete graph on $m$ verticies. For some $m \lesssim_n 1$, let $T$ be the graph where we replace each vertex of $G(U', 1)$ by $K_m$, and where we replace each edge in $G(U',1)$ by $m$ disjoint paths between the verticies of two complete graphs, so that the degree of any vertex of $T$ is $\lesssim_n 1$. We take $m \lesssim_n 1$ large enough, so that we can lift our thick map $\psi$ to a genuine embedding,
$$\psi': G(U, 1) \to T$$

Let $m'U \subset \mathbb{R}^n$ denote the set $U$ scaled by $m' > 0$. Then for $m' \lesssim_n 1$ large enough there is an embedding,

$$\phi: T \to m'N_{r}(G(U', 1))$$

\noindent so that for any vertices $v, v' \in VT$, 
$$dist(\phi(v), \phi(v') \ge 1$$
and for any edges $e, e' \in ET$ that do not share any endpoints,
$$dist(\phi(e) , \phi(e')) \ge 1$$

We can then construct a 1-expanding embedding $\Psi: N_{r}(G(U, 1)) \to m'N_{r}(G(U', 1))$ so that $\Psi|_{G(U, 1)} = \phi \circ \psi'$. Shrinking the image of $\Psi$ by a factor of $1/m'$, we get a $\delta = 1/m'$ expanding embedding as desired.
\end{proof}

We continue to the proof of \cref{mthm}.

\begin{proof}[Proof of \cref{mthm}] Let $U \subset \mathbb{R}^n$ be our open set, which we can assume to be a union of $n$-cells from $G_1$ and let $R = C vol(\partial U)^{1/(n-1)}$ for some large $C \lesssim_n 1$. Let $W' = N_{1/10}(G(U, 1))$. Using \cref{mlemma} and \cref{tlemma} we can construct an $\delta \gtrsim_n 1$  expanding embedding,

$$\Psi: (W', W' \cap \partial U)  \to (B_R, \partial B_R)$$

\noindent Letting $F_1 = \Psi^{-1}$, we can then extend $F_1$ to,

$$F_2: (Im(\Psi) \cup \partial B_R, \partial B_R) \to (U, \partial U)$$

\noindent so that $F_1$ is $C \lesssim_n 1$ Lipschitz and has degree 1 on the boundary. Next we extend $F_1$ to all of $B_R$ using an argument similar to Lemma 11.2 from \cite{GC}. 
\par If $n>3$ any two embeddings of a graph into $B^n_R$ are isotopic. If in addition both embeddings map some vertices to $\partial B_R$, we can ensure that those vertices are embedded in $\partial B_R$ throughout the isotopy. By our hypothesis there is some diffeomorphism, 
$$F_3: (B_R, \partial B_R) \to (U, \partial U)$$

Thus, since we assumed $n>3$ we can extend $F_2$ to a diffeomorphism  $F_4:  (B_R, \partial B_R) \to (U, \partial U)$.

\par Let $\Phi: (U, \partial U) \to (U, \partial U)$ be a $C \lesssim_n 1$ Lipschitz map with degree 1 on the boundary which shrinks $U - N_{1/10}(G(U,1))$ to a $(n-2)$-dimensional set $\Sigma$ (see Lemma 11.1 of \cite{GC}). Now let $F = \Phi \circ F_4$. On the the pre-image of $\Sigma$, the $(n-1)$-dilation of $F$ is 0 because $dim(\Sigma) < n-1$ and on the pre-image of $U - \Sigma$, $F$ is $C \lesssim_n 1$ Lipschitz by the properties of $F_4$ and $\Phi$. Thus, $ Dil_{n-1}(F) \lesssim_n 1$.

\end{proof}

\section{Applications of Min-Cut Max-Flow to Thick Embeddings. Proof of \cref{elemma}.} \label{s2}

The proof of \cref{elemma} will rely on an integral max-flow min-cut theorem which can be derived from the Ford-Fulkerson algorithm. The version used here is stated as follows,

\begin{theorem}\label{mfmclemma} \cite{E} Let $\Gamma$ be a directed graph with vertices $V\Gamma$ and edges $E\Gamma$. Let $c: E\Gamma \to \mathbb{Z}_{\ge0}$ be some given function, we call the capacity. Let $S,T \subset V\Gamma$ be two disjoint subset of vertices where each $v \in S$ has only edges directed out of it and each $v \in T$ has only edges directed into it. A flow is a function $f: E\Gamma \to \mathbb{Z}_{\ge0}$ which satisfies the following conditions,
\\ \\
Capacity Constraint: For each $e \in E\Gamma$, 
\begin{equation}f(e) \le c(e)
\end{equation}
Conservation of Flow: For each $v \in V\Gamma - (S \cup T)$,
\begin{equation} \sum_{w\in V\Gamma, [w,v] \in E\Gamma} f([w,v]) = \sum_{u\in V\Gamma, [v, u] \in E\Gamma} f([v, u])
 \end{equation}
Where $[v_1,v_2]$ is an edge directed from $v_1$ to $v_2$.
 \\ \par
A cut $\Sigma \subset E\Gamma$ for the sets $S$ and $T$ is a set of edges so that the graph,

$$\Gamma - \Sigma - \bigcup_{e \in E\Gamma, c(e)=0}e$$

 has no directed path from any vertex in $S$ to any vertex in $T$. Define the capacity of a cut $\Sigma$ to be $c(\Sigma) = \sum_{e \in \Sigma} c(e)$ and the capacity of a flow $f$ to be $\sum_{e: \partial e \cap S \neq \{\}} f(e)$. Then the maximum capacity of a flow is equal to the minimum capacity of a cut. 
\end{theorem}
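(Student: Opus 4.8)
\section*{Proof proposal for the integral max-flow min-cut theorem}

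The plan is to prove this by the Ford--Fulkerson augmenting-path method, which produces, at the same time, an integral flow and a cut of equal capacity; the max-flow min-cut equality and the integrality of the optimal flow then both fall out immediately. Write $|f| := \sum_{e:\, \partial e \cap S \neq \emptyset} f(e)$ for the capacity of a flow $f$; since vertices of $S$ have only outgoing edges (so there are no edges internal to $S$), $|f|$ is exactly the total flow leaving $S$. One may first reduce to a single source and sink by adjoining a new vertex $s^{*}$ with an arc $[s^{*},v]$ of capacity larger than $\sum_e c(e)$ to each $v \in S$, and a new vertex $t^{*}$ with an arc $[v,t^{*}]$ of the same huge capacity from each $v \in T$; flows in the enlarged graph with source $\{s^{*}\}$ and sink $\{t^{*}\}$ correspond bijectively and value-preservingly to flows in $\Gamma$, and any finite-capacity cut avoids the huge arcs, hence is a cut of $\Gamma$ and conversely. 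Alternatively one works directly with $S$ and $T$; I will phrase the rest that way, letting $A$ always denote a reachability set.

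The first ingredient is weak duality: $|f| \le c(\Sigma)$ for every flow $f$ and every cut $\Sigma$. Given a cut $\Sigma$, let $A$ be the set of vertices reachable from $S$ in $\Gamma - \Sigma - \bigcup_{c(e)=0} e$; by the definition of a cut, $T \cap A = \emptyset$. Summing the conservation identity over the vertices of $A$ (there are no sinks in $A$, and vertices of $S$ have no incoming edges), the internal contributions cancel and one gets $|f| = f(A,\bar A) - f(\bar A, A)$, where $f(X,Y)$ is the total flow on edges from $X$ to $Y$. Hence $|f| \le f(A,\bar A) \le c(A,\bar A)$, and every edge from $A$ to $\bar A$ of positive capacity lies in $\Sigma$ (otherwise reachability would extend beyond $A$), so $c(A,\bar A) \le c(\Sigma)$. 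In particular the set of flow values is a bounded set of non-negative integers.

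Now run the augmentation. Maintain an integral flow, starting from $f \equiv 0$. Form the residual network: for each edge $e=[u,v]$ put a forward arc $u \to v$ of residual capacity $c(e)-f(e)$ when positive, and a backward arc $v \to u$ of residual capacity $f(e)$ when positive. If some vertex of $T$ is reachable from some vertex of $S$ in the residual network, choose a directed path realizing this, let $\Delta \ge 1$ be the minimum residual capacity along it — a positive integer by integrality of $c$ and of the current $f$ — and update $f$ by $+\Delta$ on forward arcs and $-\Delta$ on backward arcs of the path; this stays a valid integral flow and raises $|f|$ by $\Delta \ge 1$. Since $|f|$ is bounded above by weak duality, after finitely many augmentations no vertex of $T$ is reachable from $S$ in the residual network. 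Let $A$ be the set of vertices then reachable from $S$; then $S \subseteq A$, $T \cap A = \emptyset$, every edge from $A$ to $\bar A$ has $f(e)=c(e)$ and every edge from $\bar A$ to $A$ has $f(e)=0$, whence $|f| = f(A,\bar A) - f(\bar A, A) = c(A,\bar A)$. Taking $\Sigma$ to be the set of edges from $A$ to $\bar A$ gives a cut (removing it together with the zero-capacity edges leaves $\bar A$ unreachable from $A$, so no directed $S$--$T$ path survives) with $c(\Sigma) = |f|$. Combined with weak duality, this $f$ is a maximum flow, $\Sigma$ a minimum cut, the two optimal values agree, and $f$ is integral.

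The only real obstacle is termination, and it is exactly where the hypothesis $c : E\Gamma \to \mathbb{Z}_{\ge 0}$ is essential: each augmentation increases the integer $|f|$ by at least $1$ while it remains bounded above, so the process must stop; for irrational capacities the augmenting-path scheme can loop forever without converging to the supremum. The remaining care is bookkeeping — matching the paper's edge-set definition of a cut (deletion kills all directed $S$--$T$ paths, after also discarding zero-capacity edges) with the vertex-partition cut $c(A,\bar A)$ that arises from the reachable set, and, if one uses the super-source/super-sink reduction, checking that the huge-capacity arcs never belong to a finite-capacity cut so that cuts correspond on the nose.
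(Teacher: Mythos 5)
The paper does not prove \cref{mfmclemma}; it cites Erickson's \emph{Algorithms}, Chapter~10, which establishes exactly the integral max-flow min-cut theorem via the Ford--Fulkerson augmenting-path argument. Your proposal reproduces that same standard argument correctly: weak duality via summing conservation over a reachability set $A$, augmentation in the residual network, termination from integrality and boundedness of $|f|$, and extraction of a saturated cut from the final reachable set. You also handle the two small bookkeeping points this paper's phrasing requires --- namely that $S$ has only outgoing and $T$ only incoming edges (so $|f|$ as defined equals the net outflow from $S$), and that the paper's edge-set notion of a cut already discards zero-capacity edges, so that every positive-capacity edge from $A$ to $\bar{A}$ must lie in $\Sigma$ and hence $c(A,\bar{A})\le c(\Sigma)$. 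This matches the cited source in both method and conclusion, so there is nothing to add.
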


We first prove a lemma that looks similar to $\cref{elemma}$.
\begin{lemma} Let $\Gamma$ be the bipartite graph on two sets of vertices $S$ and $T$ with each vertex having degree 1. That is, $\Gamma$ is a set of $|S| = |T|$ disjoint edges. Suppose we are given a 1-thick map $\psi: S \to B^n_R$ so that for any open set $U \subset B_R$,

$$|\psi(S) \cap U| \lesssim_n vol(\partial U)$$ 

Then we can extend $\psi$ to a $1/2$-thick map, $$\Psi: (\Gamma, T) \to (B_R, \partial B_R)$$
\end{lemma}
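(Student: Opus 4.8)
The plan is to build a discrete flow network on a grid approximating $B_R^n$ and invoke the integral max-flow min-cut theorem (\cref{mfmclemma}) to route disjoint paths from the points $\psi(S)$ out to $\partial B_R$. Concretely, I would overlay a unit-scale lattice $G_1$ (or a slightly finer one) on $B_R^n$, take the vertices to be lattice points inside $B_R$, and direct each lattice edge in both directions with some fixed integer capacity $c_0 \lesssim_n 1$. The sources are the lattice cells (or vertices) containing the points of $\psi(S)$ — since $\psi$ is $1$-thick, each unit cell carries $\lesssim_n 1$ points of $\psi(S)$, so we can attach a super-source $S^\ast$ to these with capacities matching the local multiplicity. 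The sink is a super-sink $T^\ast$ attached to all boundary-adjacent lattice vertices with large capacity. A flow of value $|S|$ in this network, decomposed into unit path-flows (possible by integrality and the degree/capacity bounds), gives $|S|$ edge-disjoint-up-to-multiplicity-$c_0$ lattice paths from $\psi(S)$ to $\partial B_R$; pushing these paths slightly apart geometrically and reparametrizing the edges of $\Gamma$ along them produces the desired $\Psi$, with the factor $1/2$ absorbing the thickening/perturbation and the bounded overlap $c_0$ folding into the $\lesssim_n 1$ in the definition of $1/2$-thick.

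The key steps, in order, are: (1) set up the lattice network and capacity function so that a feasible integral flow of value $|S|$ corresponds to an admissible bundle of lattice paths; (2) verify the max-flow equals $|S|$ by checking every cut has capacity $\ge |S|$; (3) decompose the integral flow into $|S|$ paths and turn the lattice paths into an honest $1$-thick (hence $1/2$-thick after perturbation) map $\Psi$ on $\Gamma$ agreeing with $\psi$ on $S$ and landing each $T$-vertex on $\partial B_R$. Step (2) is where the hypothesis $|\psi(S)\cap U| \lesssim_n \mathrm{vol}(\partial U)$ enters and is the main obstacle. Given any cut $\Sigma$ in the network, I would associate to it the open set $U_\Sigma \subset B_R$ that is "separated from the boundary" by $\Sigma$ — the union of cells the cut disconnects from $\partial B_R$ — and observe that every source point of $\psi(S)$ lying outside $U_\Sigma$ already has an uncut path to $T^\ast$, so the cut must either saturate those or lie entirely in the edges bounding $U_\Sigma$. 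The cut edges that do separate $U_\Sigma$ from its complement form a discrete analogue of $\partial U_\Sigma$, so $c(\Sigma) \gtrsim_n \mathrm{vol}(\partial U_\Sigma) + |\psi(S)\setminus U_\Sigma| \gtrsim_n |\psi(S)\cap U_\Sigma| + |\psi(S)\setminus U_\Sigma| = |S|$, where the middle inequality is exactly the hypothesis applied to $U_\Sigma$. Choosing the capacity constant $c_0$ large enough relative to the implied constants makes this go through.

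The remaining details are comparatively routine: that a cut in the directed lattice graph really does carve out such a set $U_\Sigma$ (a standard connectivity argument — take $U_\Sigma$ to be the union of cells not reachable from the boundary after deleting the cut); that the discrete perimeter of a lattice set is comparable to the volume of the boundary of the corresponding region in $\mathbb{R}^n$, which follows because each bounding lattice facet contributes a unit $(n-1)$-cube of $\partial U_\Sigma$; and that one may realize the combinatorial path bundle as a genuine map of graphs with the stated thickness, which is the same kind of argument used in the proof of \cref{kblemma}. Once this lemma is in hand, \cref{elemma} will follow by noting that its hypothesis $|\psi(S)\cap B_r| \lesssim_n r^{n-1}$ for balls is a special case feeding into a covering/good-ball argument (of the type in \cite{W}) that recovers the general $|\psi(S)\cap U|\lesssim_n \mathrm{vol}(\partial U)$ bound, so the two statements are essentially equivalent up to adjusting constants and the thickness factor; I would point out that distinction rather than reprove it here.
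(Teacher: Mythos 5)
Your proposal is correct and follows the same strategy as the paper: set up a capacitated flow network on a sub-unit lattice inside $B_R$, invoke the integral max-flow min-cut theorem, and decompose the resulting flow into paths that extend $\psi$. The hypothesis $|\psi(S)\cap U|\lesssim_n \mathrm{vol}(\partial U)$ enters, just as you predicted, precisely when lower-bounding the min-cut, and the grid edges are given a fixed capacity $c_0\lesssim_n 1$ chosen large relative to the implied constants, which is exactly the paper's role for its parameter $M$.

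Where you genuinely diverge is in how you verify that every cut has capacity $\ge |S|$. The paper handles the case where the cut passes through source-adjacent edges by an explicit induction on $|S|$: peel off one source whose edge is in the cut, decrement its capacity by one, and recurse; the base case is when the cut avoids all source edges, at which point the open set bounded by the cut is used with the hypothesis. You instead argue directly: fix the cut $\Sigma$, let $U_\Sigma$ be the region disconnected from $\partial B_R$, split the sources into those inside and those outside $U_\Sigma$, observe that the cut must contain every source edge for sources outside $U_\Sigma$ (else a path to $T^\ast$ survives), and contain the discrete boundary of $U_\Sigma$ (whose capacity, for $c_0$ large, dominates $|\psi(S)\cap U_\Sigma|$ by the hypothesis). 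This yields $c(\Sigma)\ge |\psi(S)\cap U_\Sigma| + |\psi(S)\setminus U_\Sigma| = |S|$ in one shot. The direct argument is arguably cleaner, trades the induction for the standard ``set of vertices reachable from the source'' analysis, and makes visible that the two contributions to the cut capacity are disjoint. Two cosmetic remarks: the phrase ``the cut must either saturate those or lie entirely in the edges bounding $U_\Sigma$'' should really be ``the cut must contain the $S^\ast$-edges for those sources \emph{and} the discrete boundary of $U_\Sigma$'' since both contributions are needed; and the ``pushing these paths slightly apart'' step at the end is unnecessary — the thickness bound is automatic from the integer capacity constraint, which is how the paper concludes.
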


\begin{proof} We first construct an oriented graph with capacities. Let $G$ be the oriented graph that has the same vertex set as $G(B_R^n, 1/2)$ but has an edge in each direction between adjacent vertices in $G(B_R^n, 1/2)$. So $|EG| = 2|EG(B_R^n, 1/2)|$. If we treat edges in different directions as the same segment in $B_R^n$, we can still think of $G$ as a subset of $B_R^n$. Let $\partial G = \partial B_R^n \cap VG$. 
Define a capacity function $c: EG \to \mathbb{Z}_{\ge0}$ so that,

\begin{enumerate}
\item For each $v \in \psi(S)$, let $\sum_{w \in VG: [v,w] \in EG} c([v,w]) = |\psi^{-1}(s)|$.
\item For all $v \in \psi(S)$ and $[w,v] \in EG$, we have $c([w,v]) = 0$
\item For all $v \in \partial G$ and $[v,w] \in EG$, we have $c([v,w]) = 0$.
\item For all other edges in $EG$, we set the capacity be some large $M > 0$, to be defined later. 
\end{enumerate}

Let's show that a minimal cut separating $\psi(S)$ from $\partial G$ has capacity at least $|S|$. We apply an induction on $|S|$ to take care of the cases when the cut passes near $\psi(S)$. The base case $|S| = 1$ is easy to see. We continue to proving the inductive hypothesis.
\par Let $ES$ denote the set of edges adjacent to $\psi(S)$ and let $\Sigma$ be a minimal cut. If $\Sigma \cap ES \neq \emptyset$, there is an $s \in S$ and $e = [\psi(s), v] \in \Sigma$ with $c(e)>0$. Consider a modified min-cut max-flow problem where we change $S$ to $S' = S - s$ and we change $c(e)$ to $c'(e) = c(e) - 1$, but keep everything else the same. Notice that $\Sigma$ is still a cut for the two sets $\psi(S')$ and $\partial G$. Also notice that the hypothesis of the lemma are satisfied since,

$$|\psi(S') \cap U| \le |\psi(S) \cap U| $$

\noindent for every open set $U$. Now we can apply our inductive hypothesis to get $c'(\Sigma) \ge |S'| = |S|-1$. Since $e \in \Sigma$ we see that $c(\Sigma) = c'(\Sigma)+1 \ge |S|$.
\par Thus, we can assume that $\Sigma \cap ES = \emptyset$. Let $H \subset G$ be the component of $G - \Sigma$ containing $\psi(S)$. We define an open set $U$ to be the $1/2$ neighborhood of $H$ and observe that,

$$vol(\partial U) \lesssim_n |\Sigma|$$

Since $\Sigma \cap ES = \emptyset$ we have $c(\Sigma) = M|\Sigma|$. Putting this together with the assumption of the lemma we have, for some $C \lesssim_n 1$ and $M = M(C) \lesssim_n 1$, large enough,

$$|S| \le C vol(\partial U) \le C |\Sigma| = \frac{C}{M} c(\Sigma) \le c(\Sigma)$$

This completes the inductive hypothesis. By \cref{mfmclemma} there is a flow $f$ that takes values in $\mathbb{Z}_{\ge 0}$ and has capacity $|S|$. Since $f$ satisfies the conservation property at each vertex, we can turn $f$ into a set of paths that connect each vertex in $S$ to some vertex in $T$. The capacity constraint on $f$ says that at most $M$ of these paths intersect at each edge. This gives us a way to extend $\psi$ to a 1-thick embedding $\Psi$ and completes the proof of the lemma.

\end{proof}

Now we show that the hypothesis in the previous lemma can be weakened to that of $\cref{elemma}$. The proof 
uses a good ball type argument common in metric geometry.

\begin{lemma} Let $S$ be a finite set of points in $B_R^n$ so that,

$$|S \cap B_r| \lesssim_n r^{n-1}$$ 

for each ball $B_r \subset B_R$ with $r>1/2$. Then for any open set $U\subset \mathbb{R}^n$,

$$|S \cap U| \lesssim_n vol(\partial U)$$ 
\end{lemma}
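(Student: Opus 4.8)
The plan is to prove the statement by a standard Vitali-type covering argument combined with a dyadic decomposition of the open set $U$ according to the distance to its boundary. The key observation is that the hypothesis controls $|S \cap B_r|$ only for balls of radius $r > 1/2$; for smaller balls we cannot say anything beyond the trivial bound $|S \cap B_r| \le |S|$, so the argument must be organized so that only balls of radius comparable to $1$ or larger are ever used, and this is exactly where the hypothesis $r > 1/2$ and the exponent $n-1$ combine to give $vol(\partial U)$ rather than $vol(U)$.

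First I would reduce to the case where $U$ is a ``thickened'' region: decompose $U$ into the piece $U_{\text{near}}$ of points within distance $1$ of $\partial U$, and the interior piece $U_{\text{far}} = U \setminus U_{\text{near}}$. For $U_{\text{near}}$, the point is that $U_{\text{near}}$ can be covered by $\lesssim_n vol(\partial U)$ balls of radius $1$ (e.g., pick a maximal $1$-separated subset of $\partial U$, take balls of radius $2$ about those points; these cover $U_{\text{near}}$, and their number is $\lesssim_n vol(\partial U)$ by a packing/volume estimate on the $\partial U$-tube, using that $\partial U$ is a hypersurface so its $1$-neighborhood has volume $\lesssim_n vol(\partial U) + \dots$). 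Each such ball contributes $\lesssim_n 1$ points of $S$ by the hypothesis (with $r$ a fixed constant $> 1/2$), so $|S \cap U_{\text{near}}| \lesssim_n vol(\partial U)$.

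The main work is bounding $|S \cap U_{\text{far}}|$. Here I would run a ``good ball'' argument in the spirit of the proof of the isoperimetric inequality referenced as \cite{W}: for each $p \in S \cap U_{\text{far}}$, since $p$ is far from $\partial U$, the ball $B_r(p)$ lies inside $U$ for all $r$ up to some threshold $\rho(p) \ge 1$. Using the hypothesis $|S \cap B_r(p)| \lesssim_n r^{n-1}$ together with a pigeonhole/continuity argument, I would find for each such $p$ a radius $r_p$ with $1/2 < r_p \le$ (something like the distance from $p$ to the boundary or to the bulk of $S$) such that the ball $B_{r_p}(p)$ is ``efficient,'' meaning $|S \cap B_{r_p}(p)| \gtrsim_n r_p^{n-1}$ — this is possible because $r \mapsto |S \cap B_r(p)|$ jumps from $1$ at small $r$ and grows no faster than $r^{n-1}$, so it cannot stay far below $c\, r^{n-1}$ at all scales. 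Then apply the Vitali covering lemma to the family $\{B_{r_p}(p)\}$ to extract a disjoint subfamily $\{B_{r_i}(p_i)\}$ whose $5$-fold dilates cover $S \cap U_{\text{far}}$. Summing, $|S \cap U_{\text{far}}| \le \sum_i |S \cap B_{5 r_i}(p_i)| \lesssim_n \sum_i r_i^{n-1} \lesssim_n \sum_i vol(S \cap B_{r_i}(p_i)) / (\text{efficiency})$ — wait, more precisely $\sum_i r_i^{n-1} \lesssim_n \sum_i |S \cap B_{r_i}(p_i)| \le |S|$ is circular, so instead I bound $\sum_i r_i^{n-1}$ geometrically: the disjoint balls $B_{r_i}(p_i)$ all lie inside $U$ (or a bounded neighborhood), each has a boundary sphere of area $\sim r_i^{n-1}$, and a disjoint collection of such spheres can be charged against $vol(\partial U)$ via the coarea inequality applied to the distance function — this is the classic trick: for a disjoint family of balls inside $U$, $\sum r_i^{n-1} \lesssim_n vol(\partial U)$, because each ball of radius $\ge 1$ sitting inside $U$ forces $\partial U$ to "enclose" it.

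The step I expect to be the main obstacle is precisely this last geometric charging: showing $\sum_i r_i^{n-1} \lesssim_n vol(\partial U)$ for a disjoint family of balls $B_{r_i}(p_i) \subset U$ with $r_i \ge 1/2$. The naive bound gives $\sum r_i^n \lesssim_n vol(U)$, which is the wrong (weaker, non-isoperimetric) estimate. To get the $n-1$ power against $vol(\partial U)$ I would instead not insist the balls lie inside $U$: rather, use that each $B_{r_p}(p)$ was chosen with $r_p$ essentially as large as the distance from $p$ to $\partial U$, so the sphere $\partial B_{r_p}(p)$ touches or nearly touches $\partial U$, and then a coarea/projection argument (project $\partial B_{r_i}(p_i)$ radially, or integrate the function "number of balls containing $x$" against the indicator of a neighborhood of $\partial U$) converts the disjointness of the balls into the bound $\sum_i r_i^{n-1} \lesssim_n vol(\partial U)$. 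Making this rigorous — correctly handling balls whose radius is capped not by $\partial U$ but by the clustering of $S$ itself, and stitching the two cases together — is where the real care is needed; everything else is routine covering-lemma bookkeeping.
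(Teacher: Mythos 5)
There is a genuine gap, and you already sense where it is, but your proposed stopping rule and charging scheme are both off, and the near-boundary reduction also has a hole.

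The central difficulty you flag --- converting a disjoint family of good balls into a bound by $vol(\partial U)$ rather than $vol(U)$ --- is real, and your sketch for closing it is self-contradictory: you define $r_p$ as the first scale at which $|S \cap B_{r_p}(p)| \gtrsim r_p^{n-1}$, a condition that depends only on the distribution of $S$, and then a paragraph later you ``use that $r_p$ is essentially the distance from $p$ to $\partial U$,'' which is a different quantity and need not be comparable to the one you actually defined. If $r_p$ is not tied to $\partial U$, then the balls $B_{r_i}(p_i)$ can be disjoint unit-scale balls deep inside $U$, and for those $\sum_i r_i^{n-1}$ is of the order of $vol(U)$, not $vol(\partial U)$; no projection or coarea trick on the distance function fixes this by itself. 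The $U_{\text{near}}$ reduction is also incomplete: a maximal $1$-separated subset of a hypersurface of area $A$ can have cardinality much larger than $A$ (consider a tiny sphere: area $\epsilon^{n-1}$, but its $1$-neighborhood has volume $\sim 1$), so the claimed covering of $U_{\text{near}}$ by $\lesssim_n vol(\partial U)$ unit balls does not follow from a packing estimate on the tube.

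The paper closes the gap with two ideas you do not have. First, the stopping radius is chosen in terms of $U$ alone, not $S$: set $t(p) = \inf\{r>0 : vol(\partial B_r(p) \cap U) < \delta r^{n-1}\}$, so at $r = t(p)$ the part of the sphere $\partial B_{t(p)}(p)$ lying inside $U$ occupies less than a $\delta < 1/4$ fraction of the sphere. Second --- and this is the key step your approach is missing --- since that portion of the sphere is a minority filling of the $(n-2)$-cycle $\partial B_{t(p)}(p) \cap \partial U$ on the sphere, one can compare it to the cone filling obtained by radially projecting $B_{t(p)}(p) \cap \partial U$ from a well-chosen interior point (Federer--Fleming). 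Minimality then gives $vol(\partial B_{t(p)}(p) \cap U) \lesssim_n vol(B_{t(p)}(p) \cap \partial U)$, which is precisely the isoperimetric-flavored bound that lets you charge $t(p)^{n-1}$ against the local area of $\partial U$. Running Besicovitch on the family $\{B_{t(p)}(p)\}$ (no near/far split needed) then yields $\sum_i t(p_i)^{n-1} \lesssim_n vol(\partial U)$ with bounded overlap, and the hypothesis on $S$ gives $|S \cap U| \le \sum_i |S \cap B_{t(p_i)}(p_i)| \lesssim_n \sum_i t(p_i)^{n-1}$. Without the minimal-filling comparison, the charging step you identify as the ``main obstacle'' remains open.
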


\begin{proof} Suppose we are given our open set $U$. We begin by defining a cover for $U$. For some small $1/4 > \delta >0$ and each $p \in U$ let,

$$t(p) = \inf \{r > 0: vol(\partial B_r(p)\cap U) < \delta r^{n-1}\}$$ 

\noindent and denote $B(p) = B_{t(p)}(p)$. From the definition of $t(p)$ we see that, 

\begin{equation} \label{eq:4}
vol(\partial B(p) \cap U) = \delta \, vol(\partial B(p))
\end{equation}

Since $\partial B(p) \cap \partial U$ is an embedded $(n-2)$-cycle in an $S^{n-1}$ we can assume its minimal filling is a sum of connected components of $\partial B(p) - (\partial B(p) \cap \partial U)$. There are only two such possible fillings. Since $\delta < 1/4$, equation \ref{eq:4} tells us that $\partial B(p) \cap U$ is a minimal filling of $\partial B(p) \cap \partial U$ on $\partial B(p)$. 
\par We now use an observation of Federer-Fleming from \cite{F}. Their observation is that if we radially project a $(n-1)$-dimensional set $Z \subset B(p)$, from a random point in $B(p)$ to $\partial B(p)$, then with high probability, the volume of the image of $Z$ under the projection is $\lesssim_n vol(Z)$. We can use such a projection, to project $B(p) \cap \partial U$ to a filling of $\partial B(p) \cap \partial U$ on $\partial B(p)$ with volume $\lesssim_n vol(B(p) \cap \partial U)$.  Since $\partial B(p) \cap U$ was a minimal filling for $\partial B(p) \cap \partial U$ we have,

\begin{equation} \label{eq:5}
vol(\partial B(p) \cap U) \lesssim_n vol(B(p) \cap \partial U)
\end{equation}

We now use the Besicovitch Covering theorem to find a finite set of points $\{p_i\}\subset U$ so that the set of balls $\{B(p_i)\}$ have multiplicity bounded by $C \lesssim_n 1$ and cover all of $U$. Putting the above observations together we obtain,

$$vol(\partial U) \gtrsim_n \sum_i vol(\partial U \cap B(p_i)) \gtrsim_n \sum_i vol(U \cap \partial B(p_i))$$
$$= \sum_i \delta r^{n-1} \gtrsim_n \sum_i \delta |B(p_i) \cap S'| \gtrsim_n \delta |S'|$$
 
The second inequality comes from equation \ref{eq:5} and the second to last inequality comes from the assumption on $S$ given in the hypothesis of the lemma. 
\end{proof}

\begin{proof}[Proof of \cref{elemma}] The proof follows by combining the two lemmas above.
\end{proof}

\section{Proof of \cref{wthm}}  \label{s4}

Our example in dimension 3 will make use of the coarea formula. We state it here for reference.

\begin{lemma} For $m \le n$ and an open set $U \subset \mathbb{R}^n$, let $f: U \to \mathbb{R}^m$ be a Lipschitz map. Then,

$$\int_{U} |\Lambda^m DF(x)| dx \gtrsim_n \int_{\mathbb{R}^m}  vol_{n-m}(f^{-1}(y) \cap U) dy$$
\end{lemma}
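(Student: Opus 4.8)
The plan is to recognize the stated inequality as a mild weakening of Federer's coarea formula and to reduce it to that. First I would pin down the two quantities. At a point $x$ where $F$ is differentiable, $DF(x)$ is an $m\times n$ matrix with $m\le n$; the quantity $|\Lambda^m DF(x)|$, read as in the definition of $k$-dilation via its geometric meaning, is the product of the $m$ largest singular values of $DF(x)$, which is exactly the $m$-dimensional Jacobian $J_mF(x):=\sqrt{\det\!\big(DF(x)\,DF(x)^{\top}\big)}$. On the other side, the level set $F^{-1}(y)\cap U$ is $\mathcal{H}^{n-m}$-rectifiable for a.e.\ $y$, and on such sets $vol_{n-m}$ agrees with the Hausdorff measure $\mathcal{H}^{n-m}$. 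So it suffices to establish the coarea \emph{identity}
$$\int_U J_mF(x)\,dx \;=\; \int_{\mathbb{R}^m}\mathcal{H}^{n-m}\!\big(F^{-1}(y)\cap U\big)\,dy,$$
which in particular yields the claimed $\gtrsim_n$ (indeed with constant $1$, up to the fixed dimensional normalization of $\mathcal{H}^{n-m}$).

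I would prove this identity in the usual two steps. \emph{Linear model.} If $L:\mathbb{R}^n\to\mathbb{R}^m$ is linear, then for any box $Q$ one has $\int_Q J_mL\,dx=\int_{\mathbb{R}^m}\mathcal{H}^{n-m}(L^{-1}(y)\cap Q)\,dy$ exactly: when $L$ is onto this is Fubini's theorem in coordinates adapted to $\ker L\oplus(\ker L)^{\perp}$, using that $L|_{(\ker L)^{\perp}}$ has Jacobian determinant $J_mL$; when $L$ is not onto, both sides vanish, since $L^{-1}(y)=\emptyset$ for a.e.\ $y$ and $J_mL=0$. \emph{General $F$.} By Rademacher's theorem $F$ is differentiable a.e., and by a Lusin-type $C^1$ approximation $F$ agrees with a $C^1$ map off a set of arbitrarily small measure, with matching derivatives there; combined with Eilenberg's inequality to bound the fiberwise contribution of the small exceptional set, this reduces the identity to $F\in C^1$, and, after mollifying, to $F$ smooth. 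For smooth $F$ I would split $U$ into the regular set $\{\operatorname{rank}DF=m\}$, where the implicit function theorem locally realizes $F$ as a submersion and comparison with the linear model on small pieces (using continuity of $DF$ to keep the Jacobian nearly constant) gives the identity, and the critical set $C=\{J_mF=0\}$, where the left-hand side vanishes and, by Sard's theorem, $\mathcal{H}^m(F(C))=0$, so $F^{-1}(y)\cap C$ is $\mathcal{H}^{n-m}$-null for a.e.\ $y$ and the right-hand side vanishes as well. Summing over pieces and passing to the limit in the approximations completes the proof.

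The substance of the argument, and what I expect to be the main obstacle, lies entirely in the measure-theoretic bookkeeping rather than the geometry: the measurability of $y\mapsto vol_{n-m}(F^{-1}(y)\cap U)$; the Sard-type statement that $\mathcal{H}^m(F(\{J_mF=0\}))=0$, which for merely Lipschitz $F$ does not follow from the classical Morse--Sard theorem and must instead be obtained as part of the coarea package (via Eilenberg's inequality and a covering argument); and verifying that the $C^1$/smooth approximation of $F$ can be arranged so as to control the fiber integral $\int_{\mathbb{R}^m}\mathcal{H}^{n-m}(F^{-1}(y)\cap U)\,dy$, and not merely the Jacobian integral. Since all of this is standard, in the write-up I would present the linear/Fubini computation and the role of Sard explicitly and cite Federer's coarea formula (\cite{F}, or Evans--Gariepy) for the remaining technical details. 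I also note that \cref{wthm} uses only the $\le$ direction of this identity, and only for $n=3$ and $m\in\{1,2\}$; for $m=1$ one has $J_1F=|\nabla F|$ and the inequality is the elementary coarea inequality for real-valued Lipschitz functions.
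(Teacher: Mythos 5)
The paper does not actually prove this lemma---it is stated \emph{for reference} as the coarea formula, implicitly deferring to the literature (e.g.\ Federer \cite{F})---so there is no internal argument to compare against. Your reading and derivation are correct: the observation that $|\Lambda^m DF|$ must here be interpreted as the product of the $m$ largest \emph{singular} values of $DF$, i.e.\ the Jacobian $J_mF=\sqrt{\det(DF\,DF^\top)}$ (the paper's literal definition of $|\Lambda^k\cdot|$ covers only symmetric matrices), is exactly the right reconciliation, and the linear-model-plus-Fubini step together with the Rademacher / Lusin-$C^1$ / Eilenberg / Sard scaffolding is the standard proof of Federer's coarea \emph{equality}, of which the paper's $\gtrsim_n$ is the easy direction up to the normalization of $\mathcal{H}^{n-m}$. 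You also correctly isolate where the real work lies: measurability of the fiber function, a Sard-type statement for merely Lipschitz maps (which genuinely needs Eilenberg's inequality and a covering argument, not classical Morse--Sard), and controlling the fiber integral, not just the Jacobian integral, under approximation. One minor correction: in the proof of \cref{wthm} the lemma is invoked once, for $\pi\circ F$ with $n=3$ and $m=2$; the $m=1$ case you mention is not actually used.
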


\begin{proof}[Proof of \cref{wthm}]
 Given $N \in \mathbb{N}$, our goal is to construct a set $U_N \subset R^3$ of $\textup{area}(\partial U_N) \lesssim 1$ so that any Lipschitz map $$F: (B_1^3, \partial B^3) \to (U_N, \partial U_N)$$ with degree 1 on the boundary and with $D =  Dil_2(F)$, will have $D \gtrsim N^{1/2}$.
 \\ \par
 We begin by defining two open sets inside $B_1^3$. For some small $\delta > 0$, let $T_1$ be an embedded tube which is a $\delta$-neighborhood of an embedded curve that sits inside $\{1/2 \le x^2 + y^2 \le 3/4\}$ and spins $N$ times around the $z$-coordinate axis. Also ensure that $T_1$ intersects the upper hemisphere of $\partial B_1$ in one small disc. Let $T_2$ be a $\delta$-neighborhood of an embedded $B_1^2$, say defined by $\{x^2 + y^2 \le 1/2, \,z \in [1/2-2\delta, 1/2]\}$. We ensure that $T_2$ intersects $T_1$ in a small neighborhood of some point. For $T = T_1 \cup T_2$, we let $U_N = B_1^3 - T$.  Note that $\partial U_N$ is diffeomorphic to $\partial B_1^3$ and $\textup{area}(\partial U_N) \lesssim 1$ as long as $\delta>0$ is small enough. The set $U_2$ is illustrated in Figure 2 below.
 \\ \par 
We now show that $D\gtrsim N^{1/2}$. Define $\pi: B_1^3 \to B_1^2$ to be the projection in the $z$-direction and let $U' = U_N \cap \{x^2 + y^2 \le 1/3, \,z \ge 1/2\}$, in other words, the region above the middle of $T_2$. By the coarea formula applied to $\pi \circ F|_{F^{-1}(U')}$, there is a point in $B^2_1$ whose pre-image is a segment $\gamma' \subset B_1^3$ with $\textup{length}(\gamma') \lesssim D$ and endpoints on $\partial B_1^3$. Note that one of the end points of $F(\gamma')$ lies on $T_2$ and the other lies in the upper hemisphere of $\partial B^3_1$. Let $\gamma''$ be the geodesic segment in $\partial B_1^3$ between the endpoints of $\gamma'$ and define a loop $\gamma = \gamma' \cup \gamma''$. Note that $F(\gamma'')$ must wind around the entire length of $T_1$. By the cone inequality we can fill $\gamma$ by a disc $\Sigma$ of area $\lesssim D$.
\par
Consider what $(\pi \circ F) (\gamma)$ looks like in $B^2_1$. All points in $B^2_1$ with radius in $[1/3, 1/2]$ have winding number $\gtrsim N$ with respect to $(\pi \circ F) (\gamma)$. Thus,

$$area(F(\Sigma)) \ge area(\pi \circ F (\Sigma)) \gtrsim N$$

Where area is counted with multiplicity here. Putting together this observation with $area(F(\Sigma)) \le area(\Sigma)D \lesssim D^2$, we get $D \gtrsim N^{1/2}$.  
  
 \end{proof}

\begin{figure}[h]
\centering
\includegraphics[width=0.8\textwidth]{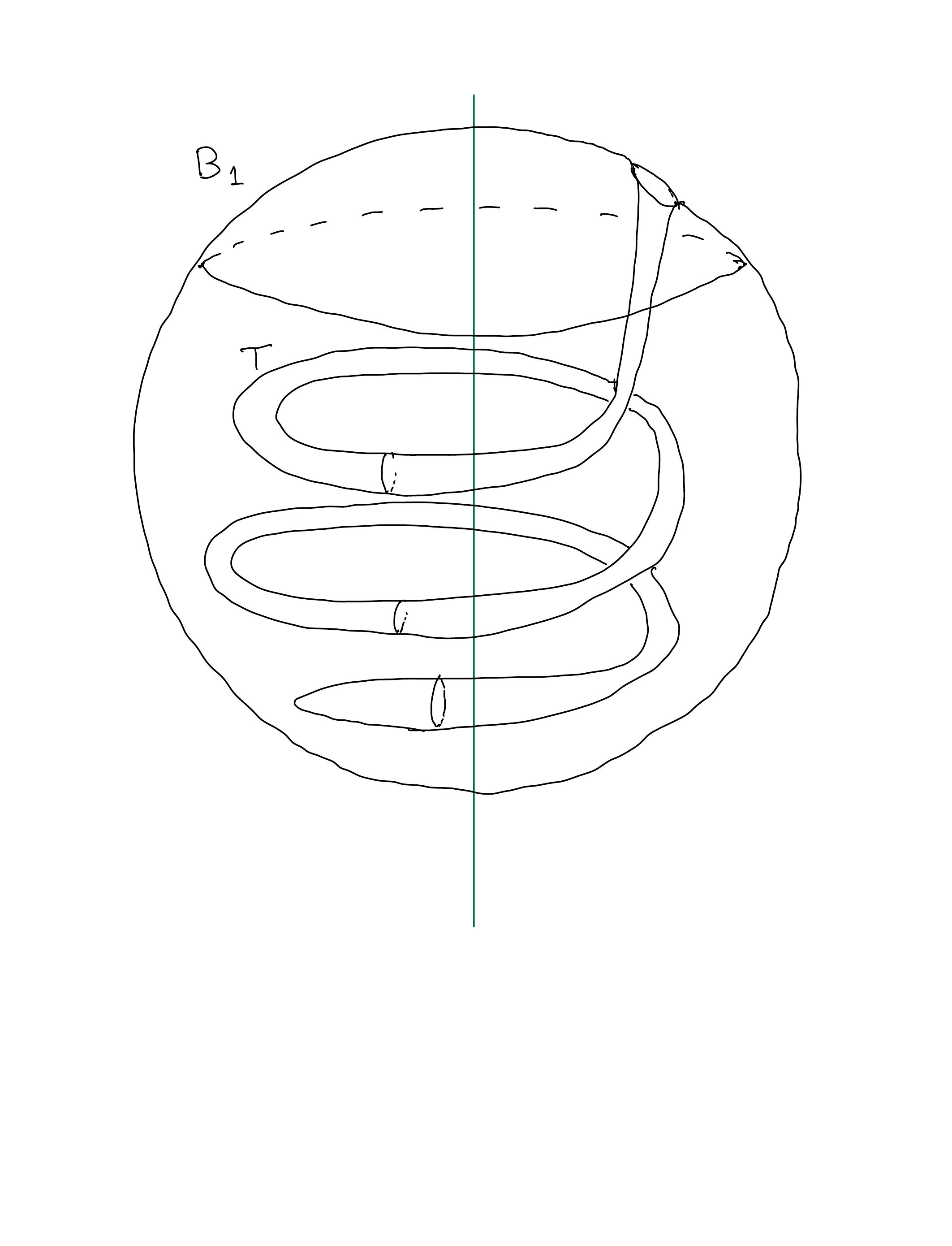}
\caption{Sketch of $U_2$}
\end{figure}


\begin{thebibliography}{9}
\bibitem[B]{B} Banyaga, A. Formes-volume sur les varietes a bord. Enseignement Math. (2), 20:127-131,\bibitem[KB]{KB} Barzdin, Y. On the realization of networks in three-dimensional space. Selected Works of AN Kolmogorov. Springer, Dordrecht, 1993. 194-202. 1974
\bibitem[E]{E} Erickson, J. Algorithms. (1999). Chapter 10.
\bibitem[F]{F} Federer, H., Fleming, W. Normal and Integral Currents. Ann. Math. 72 (2), 458-520 (1960).
\bibitem[GC]{GC} Guth, L. Contraction of areas vs. topology of mappings. Geom. Funct. Anal. 23 (2013), 1804-1902.
\bibitem[GW]{GW} Guth, L. The width-volume inequality, Geom. Funct. Anal. 17 (2007), no. 4, 1139-1179.
\bibitem[GQ]{GQ} Guth, L. Recent progress in quantitative topology. Surveys in Differential Geometry, 22(1):191-216, 2017.
\bibitem[W]{W} Wenger, S. A short proof of Gromov's filling inequality. Proceedings of the American Mathematical Society 136.8 (2008): 2937-2941.
\end{thebibliography}
\end{document}